\theoremstyle{plain}
\newtheorem{theorem}{Theorem}
\newtheorem{lemma}[theorem]{Lemma}
\newtheorem{proposition}[theorem]{Proposition}
\theoremstyle{definition}
\newtheorem{example}[theorem]{Example}
\theoremstyle{remark}
\newtheorem{remark}[theorem]{Remark}
\newcommand{\C}{\mathbb{C}}
\newcommand{\svec}{\mathbf{s}}
\newcommand{\tvec}{\mathbf{t}}
\newcommand{\alphavec}{{\boldsymbol\alpha}}
\newcommand{\betavec}{{\boldsymbol\beta}}
\newcommand{\lambdavec}{{\boldsymbol\lambda}}
\newcommand{\kappavec}{{\boldsymbol\kappa}}
\newcommand{\muvec}{{\boldsymbol\mu}}
\newcommand{\nuvec}{{\boldsymbol\nu}}
\newcommand{\symS}{\mathfrak{S}}
\begin{document}
\title{Schur polynomials, banded Toeplitz matrices and Widom's formula} 

\author{Per Alexandersson\\
\small Department of Mathematics\\[-0.8ex]
\small Stockholm University\\[-0.8ex] 
\small S-10691, Stockholm, Sweden\\
\small\tt per@math.su.se\\
}

\maketitle

\begin{abstract}
We prove that for arbitrary partitions $\lambdavec \subseteq \kappavec,$ and integers $0\leq c<r\leq n,$ the sequence of Schur polynomials
$S_{(\kappavec + k\cdot \mathbf{1}^c)/(\lambdavec + k\cdot \mathbf{1}^r)}(x_1,\dots,x_n)$ for $k$ sufficiently large,
satisfy a linear recurrence. The roots of the characteristic equation are given explicitly.
These recurrences are also valid for certain sequences of minors of banded Toeplitz matrices.

In addition, we show that Widom's determinant formula from 1958 is a special case of a well-known identity for Schur polynomials.

\bigskip\noindent \textbf{Keywords:} Banded Toeplitz matrices; Schur polynomials; Widom's determinant formula; sequence insertion; Young tableaux; recurrence
\end{abstract}

\section{Introduction}

\subsection{Minors of banded Toeplitz matrices}
Fix a positive integer $n$ and a finite sequence $s_0,s_1,\dots,s_n$ of complex numbers.
Define an infinite banded Toeplitz matrix $A$ by the formula
\begin{equation}\label{eqn:A}
A:= (s_{j-i}),\, 1\leq i < \infty, 1\leq j < \infty \text{ with } s_i:=0\text{ for } i>n, i<0.
\end{equation}
Given an increasing
$r-$tuple $\alphavec = (\alpha_1,\alpha_2,\dots,\alpha_r)$ and 
an increasing $c-$tuple $\betavec = (\beta_1,\beta_2,\dots,\beta_c)$ of positive integers with $r\leq c \leq n,$
define $D_{\alphavec,\betavec}^k$ as the $k\times k-$matrix obtained by first removing rows indexed by $\{\alpha_i\}_{i=1}^r$ 
and columns indexed by $\{\beta_i\}_{i=1}^c$ from $A$ and then selecting the leading $k \times k-$sub-matrix.
In particular, we let $D_c^k$ to be $D_{\alphavec,\betavec}^k$ for $\alphavec=\emptyset,\betavec=(1,2,\dots,c).$
We will also require $s_0=1$ which is a natural assumption\footnote{
If $s_0=0,$ the first column of $D_{\alphavec,\betavec}^k$ will consist of zeros,
unless $\beta_1=1.$ In the first case, $\det D_{\alphavec,\betavec}^k$ is therefore 0 for every $k>0$ and uninteresting.
In the latter case, we may just as well use the sequence $s_1,s_2,\dots,s_n$
and decrease all entries in $\betavec$ by 1 and obtain the exact same sequence.
Thus, there is no loss of generality if we assume $s_0\neq 0.$
Furthermore, we are interested on the determinants of $D_{\alphavec,\betavec}^k,$
so assuming $s_0:=1$ is not a big restriction and the general case can easily be recovered.}.

A great deal of research has been focused on the asymptotic 
eigenvalue distribution of $D_c^k$ as $k \rightarrow \infty,$ 
the most important are the Szegö limit theorem from 1915,
and the strong Szegö limit theorem from 1952. 

There are many ways to generalize the strong Szegö limit theorem,
for example, the Fisher-Hartwig conjecture from 1968.
Some cases of the conjecture have been promoted to a theorem, 
based on the works of many people the last 20 years, including Widom, Basor, Silberman, Böttcher and Tracy.
A possible refinement of the conjecture is the Basor-Tracy conjecture, see \cite{boettcher2,basor}.

Asymptotics of Toeplitz determinants arises naturally in many areas;
Szegö himself considered the two-dimensional Ising model.
For a more recent application in combinatorics, see \cite{bdj}, 
where the length of the longest increasing subsequence in a random permutation is studied.

A classic result in the theory of banded Toeplitz matrices was obtained by H.~Widom \cite{widom}.
In a modern setting, it may be formulated as follows:

\begin{theorem}(Widom's determinant formula, \cite{boettcher})
Let $\psi(t):= \sum_{i=0}^n s_i t^i.$ 
If the zeros $t_1,t_2,\dots,t_n$ of $\psi(t)=0$ are distinct then, 
for every $k\geq 1,$ 
\begin{equation}\label{eqn:widom}
\det D_c^k = \sum_{\sigma} C_\sigma w_\sigma^k, \quad \sigma \in \binom{[n]}{n-c}
\end{equation}
where
\begin{equation*}
w_\sigma:=(-1)^{n-c} s_n \prod_{i \in \sigma} t_i\text{ and }
C_\sigma:= \prod_{i \in \sigma} t_i^c \prod_{\stackrel{j\in \sigma}{i \notin \sigma}} (t_j - t_i)^{-1}.
\end{equation*}
\end{theorem}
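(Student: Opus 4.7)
The plan is to identify $\det D_c^k$ as a rectangular Schur polynomial and then apply a standard expansion coming from the bialternant formula. Factoring $\psi(t) = s_n \prod_{i=1}^n(t - t_i)$ together with $s_0 = 1$ yields both $s_n \prod_i t_i = (-1)^n$ and $s_k = (-1)^{n-k} s_n\, e_{n-k}(t_1, \ldots, t_n)$. Setting $x_i := -1/t_i$, a short calculation gives $s_k = e_k(x_1, \ldots, x_n)$ for $0 \leq k \leq n$. The $(i,j)$-entry of $D_c^k$ is then $s_{c+j-i} = e_{c+j-i}(x_1, \ldots, x_n)$, so matching against the dual Jacobi--Trudi formula $s_\lambda(x) = \det(e_{\lambda'_i - i + j})$ forces $\lambda'_i = c$ for every $i \leq k$, i.e.\ $\lambda = (k^c)$; hence $\det D_c^k = s_{(k^c)}(x_1, \ldots, x_n)$.

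Next I would apply the bialternant formula $s_{(k^c)} = a_{(k^c)+\delta}/a_\delta$ with $\delta = (n-1, \ldots, 0)$, and then perform a generalized Laplace expansion of the numerator along the first $c$ columns. For each subset $R \in \binom{[n]}{c}$, pulling $x_i^{k+n-c}$ out of every row $i \in R$ leaves two Vandermonde-type minors, one indexed by $R$ and one by $\bar R$; together with the denominator $a_\delta$ these cancel all the internal pairings and leave only the mixed ones, yielding the identity
\[ s_{(k^c)}(x_1, \ldots, x_n) \;=\; \sum_{R \in \binom{[n]}{c}} \frac{\prod_{i \in R} x_i^{k+n-c}}{\prod_{i \in R,\, j \notin R}(x_i - x_j)}, \]
which is the well-known Schur identity referenced in the abstract. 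Setting $\sigma := [n] \setminus R$ (so $|\sigma| = n-c$) and substituting $x_i = -1/t_i$ via the two key identities $x_i - x_j = (t_j - t_i)/(t_i t_j)$ and $\prod_i x_i = s_n$ then rewrites each summand precisely in the form $C_\sigma w_\sigma^k$ of the statement.

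The main obstacle is the delicate sign bookkeeping. Signs enter from four sources: the Laplace expansion contributes $(-1)^{\sum R + c(c+1)/2}$; two column reversals that turn the $R$- and $\bar R$-blocks into standard Vandermondes contribute $(-1)^{\binom{c}{2}}$ and $(-1)^{\binom{n-c}{2}}$; the reversal between $a_\delta$ and the usual Vandermonde $\prod_{i<j}(x_j-x_i)$ contributes $(-1)^{\binom{n}{2}}$; and the substitution $x_i = -1/t_i$ brings further powers of $-1$. Verifying that all of these collapse to exactly the single factor $(-1)^{n-c}$ sitting inside $w_\sigma$ is where the real effort lies.
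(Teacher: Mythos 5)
Your proposal is correct and, structurally, it is the same proof as the paper's: substitute $x_i=-1/t_i$ so that the band entries become elementary symmetric functions, identify $\det D_c^k=S_{(k^c)}(x_1,\dots,x_n)$ (this is the paper's Prop.~\ref{prop:matrixisschur} in the special case $\alphavec=\emptyset$, $\betavec=(1,\dots,c)$), expand the rectangular Schur polynomial as a sum over subsets, and translate back to the $t$-variables --- the paper packages that last step as the equivalence between \eqref{eqn:widom} and Thm.~\ref{lemma:widomequiv}. The one real divergence is the middle step. The paper simply quotes Macdonald's identity for Hall polynomials (Prop.~\ref{prop:macdonald}), which for $\lambda=(k^c)$ instantly yields $S_{(k^c)}=\sum_{\tau}\prod_{i\in\tau}x_i^{k}\prod_{i\in\tau,\,j\notin\tau}x_i/(x_i-x_j)$ with no signs to track; you instead rederive this identity from the bialternant formula by a Laplace expansion of $a_{(k^c)+\delta}$ along the first $c$ columns. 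Your target identity is correct and is literally the same as the paper's, since $\prod_{i\in\tau,\,j\notin\tau}x_i=\prod_{i\in\tau}x_i^{\,n-c}$, so the derivation will go through; what you buy is self-containedness, and what you pay is exactly the sign bookkeeping you defer at the end --- which is the labour the citation spares the paper. If you do carry it out, note two points: first, the small slip $x_i-x_j=(t_i-t_j)/(t_it_j)$, not $(t_j-t_i)/(t_it_j)$; second, the factor $(-1)^{n-c}$ in $w_\sigma$ is consumed by $\prod_{i\in\sigma}t_i=\prod_{i\in\sigma}(-1/x_i)$, after which $w_\sigma=\prod_{i\notin\sigma}x_i$ --- the paper's short equivalence lemma records precisely this accounting and is worth imitating rather than redoing from the Vandermonde side.
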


In 1960, by using Widom's formula, P.~Schmidt and F.~Spitzer gave a description of the limit set of the eigenvalues of $D_c^k$
as $k \rightarrow \infty.$ In the above notation, part of their theorem can be stated as follows:
\begin{theorem}(P.~Schmidt, F.~Spitzer, \cite{spitzer})\label{thm:spitzer}
Let $I_k$ denote the $k\times k$-identity matrix and define
\begin{equation*}
B = \left\{ v \Big\vert v = \lim_{k \rightarrow \infty} v_k, \det(D_c^{k} - v_k I_{k})=0 \right\},
\end{equation*}
that is, $B$ is the set of limit points of eigenvalues of $\{D_c^k\}_{k=0}^\infty.$
Let 
\begin{equation*}
f(z) = \sum_{i=0}^n s_i z^{i-c}\text{ and } Q(v,z) = z^c(f(z)-v).
\end{equation*}
Order the moduli of the zeros, $\rho_i(v)$, of $Q(v,z)$ in increasing order, 
$$0 < \rho_1(v) \leq \rho_2(v) \leq \dots \leq \rho_n(v),$$
with possible duplicates counted several times, according to multiplicity.
Let $C = \left\{ v | \rho_c(v) = \rho_{c+1}(v) \right\}.$
Then, $B=C.$
\end{theorem}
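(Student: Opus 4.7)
The plan is to apply (a suitable generalisation of) Widom's formula to the shifted matrix $D_c^k - v I_k$ and then analyse the resulting exponential sum as $k\to\infty$. Since subtracting $vI_k$ only modifies the main diagonal, $D_c^k - v I_k$ is itself a banded Toeplitz-type matrix whose associated Laurent symbol is $f(z)-v$, equivalently $z^{-c}Q(v,z)$. Assuming the zeros $t_1(v),\dots,t_n(v)$ of $Q(v,\cdot)$ are distinct (the degenerate locus is dealt with by continuity), Widom's formula extends to give
\begin{equation*}
\det(D_c^k - v I_k) = \sum_{\sigma \in \binom{[n]}{n-c}} C_\sigma(v)\, w_\sigma(v)^k,
\qquad w_\sigma(v) = (-1)^{n-c} s_n \prod_{i\in\sigma} t_i(v),
\end{equation*}
so the description of $B$ reduces to a question about the zeros of this exponential polynomial in $k$ as a function of $v$.

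For the inclusion $B\subseteq C$, I would argue contrapositively. Fix $v_0\notin C$, so $\rho_c(v_0)<\rho_{c+1}(v_0)$; on a small neighbourhood $U$ of $v_0$ the ``top'' index set $\sigma^\star$ picking the $n-c$ roots of largest modulus is constant, and $|w_{\sigma^\star}(v)|$ strictly dominates every other $|w_\sigma(v)|$. Dividing by $w_{\sigma^\star}(v)^k$ yields uniform convergence
\begin{equation*}
w_{\sigma^\star}(v)^{-k}\det(D_c^k - v I_k) \longrightarrow C_{\sigma^\star}(v)
\end{equation*}
on $U$, where $C_{\sigma^\star}$ is a holomorphic limit that is not identically zero. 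Hurwitz's theorem then prevents zeros of $\det(D_c^k-vI_k)$ from accumulating at $v_0$, so $v_0\notin B$.

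For $C\subseteq B$, fix $v_0\in C$; now two or more terms share the dominant modulus, so after rescaling by this common modulus raised to the $k$, the determinant becomes a finite superposition $\sum_\sigma C_\sigma(v)e^{ik\theta_\sigma(v)}$ of oscillating contributions, plus a subexponentially small remainder. On a small circle around $v_0$ the phase differences $k(\theta_\sigma-\theta_{\sigma'})(v)$ wind many times as $k$ grows; by the argument principle, the argument of the dominant sum sweeps at least a full $2\pi$, so Rouch\'e produces a zero $v_k$ of $\det(D_c^k - v I_k)$ inside the circle. Shrinking the radius appropriately as $k\to\infty$ yields $v_k\to v_0$.

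The principal obstacle lies in the second direction: one must show that the oscillating dominant terms genuinely force a zero inside the shrinking circle rather than cancelling accidentally only outside of it, and must handle the exceptional loci where two roots of $Q(v,\cdot)$ coalesce or where some $C_\sigma$ vanishes (so that rescaling becomes degenerate). Both difficulties can be absorbed through a closure argument, using that $B$ and $C$ are both closed subsets of $\mathbb{C}$ and that the bad set is a proper analytic subvariety, hence nowhere dense.
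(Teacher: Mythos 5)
Your overall strategy---represent $\det(D_c^k - vI_k)$ as the exponential sum given by Widom's formula for the shifted symbol $f(z)-v$, then read off the accumulation set of its zeros---is sound, and it is in fact the same skeleton the paper uses: the paper obtains the exponential-sum structure via its recurrence (Theorem \ref{thm:rootproducts}) and then delegates the entire asymptotic analysis to the Beraha--Kahane--Weiss theorem (Theorem \ref{thm:bkw}, cited from \cite{bkw}). The genuine gap in your write-up is that the hard direction $C\subseteq B$ is exactly the content of that cited theorem, and your sketch of it does not yet constitute a proof. The statement that ``the phase differences $k(\theta_\sigma-\theta_{\sigma'})(v)$ wind many times'' and that Rouch\'e then ``produces a zero'' hides the essential hypothesis: one must know that the ratio $w_\sigma(v)/w_{\sigma'}(v)$ of two tied dominant terms is a \emph{nonconstant} holomorphic function near $v_0$ (equivalently, that no two characteristic roots differ identically by a unimodular constant, and that the recurrence is minimal). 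If that ratio were a constant of modulus one, the two dominant terms could cancel identically or never cancel, and no zero need appear near $v_0$. This non-degeneracy is precisely the condition the paper verifies before invoking Theorem \ref{thm:bkw} (``all roots are different \dots the left-hand side of the characteristic equation is irreducible''), and you never check it. Until you either verify it and carry out the two-dominant-term argument (showing $(w_\sigma/w_{\sigma'})^k$ attains the value $-C_{\sigma'}(v)/C_\sigma(v)$ in every small disk about $v_0$ for large $k$), or cite Beraha--Kahane--Weiss as the paper does, the inclusion $C\subseteq B$ is unproved.

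Two smaller but real issues in the easy direction. First, Hurwitz's theorem prevents zeros of $\det(D_c^k-vI_k)$ from accumulating at $v_0$ only if the limit function $C_{\sigma^\star}$ is \emph{nonzero at $v_0$}, not merely ``not identically zero''; if $C_{\sigma^\star}(v_0)=0$ then Hurwitz gives the opposite conclusion. You therefore need the observation (made in the paper's footnote to its proof) that Widom's coefficients $C_\sigma=\prod_{i\in\sigma}t_i^c\prod(t_j-t_i)^{-1}$ never vanish where the roots are distinct and nonzero. Second, your normalisation by $w_{\sigma^\star}(v)^{-k}$ and the formula itself degenerate where roots of $Q(v,\cdot)$ collide; saying this is ``dealt with by continuity'' or ``absorbed through a closure argument'' is not enough for $B\subseteq C$, since a closure argument can only enlarge the set of accumulation points you must rule out. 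One needs either a confluent form of Widom's formula or an argument that the normalised determinants still converge locally uniformly near such $v_0$ when $\rho_c(v_0)<\rho_{c+1}(v_0)$ (which holds because the top $n-c$ roots are separated in modulus from the rest, so their product and the cross-differences remain controlled); this should be said explicitly.
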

The Laurent polynomial $f(z)$ is called the \emph{symbol} associated with the Toeplitz matrix $D_c^k,$
and it is an important tool\footnote{Note that $f$ has a close resemblance with $\psi$ in Widom's formula.} for studying asymptotics.

More recently, a newer approach using the theory of Schur polynomials has been successfully used to
further investigate the series $\{\det D_{\alphavec,\betavec}^k \}_{k=1}^\infty$, e.g. \cite{bumbdiaconis}.
For a recent application of Schur functions in the non-banded case, see \cite{borodin}.

There is also a connection between multivariate orthogonal polynomials and certain determinants of $D_{\alphavec,\betavec}^k,$
considered as functions of $(s_0,s_1,\dots,s_n).$
The solution set to a system of polynomial equations obtained from some $\det D_{\alphavec,\betavec}^k$
converges to the measure of orthogonality as $k \rightarrow \infty.$ 
For example, in 1980, a bivariate generalization of Chebyshev polynomials was constructed by K.~B.~Dunn and R.~Lidl.
Some more recent applications of the theory of symmetric functions
are \cite{beerends,gepner}, where use of Schur polynomials and representation theory 
gives multivariate Chebyshev polynomials.
These multivariate Chebyshev polynomials are also minors of certain Toeplitz matrices.

For example, if $n=2$ and $P_j(s_1,s_2) := \det D_1^j,$
we have that 
$$T_j(x) = P_j(x-\sqrt{x^2-1},x+\sqrt{x^2-1}) = S_{(j)}(x-\sqrt{x^2-1},x+\sqrt{x^2-1}),$$
where $T_j(x)$ is the $j$th Chebyshev polynomial of the second kind, 
and $S_{(j)}$ is the Schur polynomial for the partition with one part of size $j,$ in two variables.

However, the close connection between multivariate Chebyshev polynomials 
and Schur polynomials (and thus minors of banded Toeplitz matrices)
has not yet been sufficiently investigated.

\subsection{Main results}
We start with giving a Schur polynomial interpretation of $\det D_{\alphavec,\betavec}^k.$

Set $s_i := s_i(x_1,x_2,\dots,x_n)$ where $s_i$ is the $i:$th elementary symmetric polynomial.
We impose a natural\footnote{This ensures that no leading matrix of $D_{\alphavec,\betavec}^k$ is upper-triangular 
with a zero on the main diagonal, which would force
$\det D_{\alphavec,\betavec}^k$ to vanish.} restriction on $\alphavec$ and $\betavec,$ namely $\alpha_i\geq \beta_i$ for $i=1,2,\dots,r.$

\begin{proposition}
In the above notation, for $k$ sufficiently large, we have
\begin{equation}\label{eqn:minorisschur}
\det D_{\alphavec,\betavec}^k  = S_{(\lambdavec + k\muvec)/(\kappavec+k\nuvec)}(x_1,x_2,\dots,x_n), 
\end{equation}
where $S_{(\lambdavec + k\muvec)/(\kappavec+k\nuvec)}$ is a skew Schur polynomial defined below.
Here $\lambdavec, \kappavec, \muvec, \nuvec$ are partitions given by
\begin{equation*}
\lambdavec=(1-\beta_1,2-\beta_2,\dots,c-\beta_c), \quad \kappavec =(1-\alpha_1,2-\alpha_2,\dots,r-\alpha_r)
\end{equation*}
\begin{equation*}
\muvec = (\underbrace{1,1,\dots,1}_c), \quad \nuvec = (\underbrace{1,1,\dots,1}_r).
\end{equation*}
\end{proposition}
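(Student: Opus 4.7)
The plan is to realize $D_{\alphavec,\betavec}^k$, up to simultaneous row/column reversal followed by transposition, as the N\"agelsbach--Kostka (dual Jacobi--Trudi) matrix for the skew shape $(\lambdavec+k\muvec)/(\kappavec+k\nuvec)$; the dual Jacobi--Trudi identity then delivers the claimed Schur polynomial. To unpack the entries: if $\tilde{\imath}_p$ (resp.\ $\tilde{\jmath}_q$) denotes the $p$-th (resp.\ $q$-th) positive integer not in $\alphavec$ (resp.\ $\betavec$), the $(p,q)$ entry of $D_{\alphavec,\betavec}^k$ is simply $s_{\tilde{\jmath}_q-\tilde{\imath}_p}$, with $s_i$ the elementary symmetric polynomial as in the paper.

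The central step is the combinatorial identity
\[
\tilde{\jmath}_q = q + (\lambdavec+k\muvec)'_{k+1-q},\qquad \tilde{\imath}_p = p + (\kappavec+k\nuvec)'_{k+1-p},
\]
where $'$ denotes conjugation and $k$ is large enough for $\lambdavec+k\muvec$ and $\kappavec+k\nuvec$ to be genuine partitions. I would prove it by a double count: on the one hand, $\tilde{\jmath}_q - q = \#\{i : \beta_i \leq \tilde{\jmath}_q\}$ by pigeonhole on the first $\tilde{\jmath}_q$ integers; on the other, unpacking the conjugate yields $(\lambdavec+k\muvec)'_{k+1-q} = \#\{i : \beta_i \leq i+q-1\}$; and the equivalence $\beta_i \leq \tilde{\jmath}_q \iff \beta_i \leq i+q-1$ follows immediately from the definition of $\tilde{\jmath}_q$. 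The identity for $\tilde{\imath}_p$ follows in the same way.

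Substituting these into $s_{\tilde{\jmath}_q-\tilde{\imath}_p}$ and reindexing $p \mapsto k+1-p$, $q \mapsto k+1-q$, the $(p,q)$ entry of the row-and-column reversed matrix becomes $s_{(\lambdavec+k\muvec)'_q - (\kappavec+k\nuvec)'_p + p - q}$, which is precisely the $(q,p)$ entry of the $k \times k$ N\"agelsbach--Kostka matrix $M$ for $(\lambdavec+k\muvec)/(\kappavec+k\nuvec)$. Thus the reversed $D_{\alphavec,\betavec}^k$ equals $M^T$. Since simultaneous row/column reversal and transposition each preserve the determinant, and $\det M = S_{(\lambdavec+k\muvec)/(\kappavec+k\nuvec)}$ by dual Jacobi--Trudi (the formula is stable under padding the conjugate partitions with trailing zeros, so the $k \times k$ size is legitimate even when $\beta_1 > 1$), the proposition follows.

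The delicate point is the combinatorial identity itself: aligning the ``kept'' indices with the conjugate partitions through several index shifts. The paper's restriction $\alpha_i \geq \beta_i$ guarantees $\lambdavec+k\muvec \supseteq \kappavec+k\nuvec$ so that the skew shape is well defined, and ``$k$ sufficiently large'' ensures both shifted sequences are bona fide partitions --- both hypotheses being needed for the dual Jacobi--Trudi identity to apply.
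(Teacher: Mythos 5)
Your proposal is correct and follows essentially the same route as the paper: both identify $D_{\alphavec,\betavec}^k$, after an anti-diagonal transposition (your reversal-plus-transposition), with the dual Jacobi--Trudi matrix of $(\lambdavec+k\muvec)/(\kappavec+k\nuvec)$ by counting removed rows/columns and matching against the conjugate partitions. If anything, your verification is the more complete one, since you check every entry via the identity $\tilde{\jmath}_q = q + (\lambdavec+k\muvec)'_{k+1-q}$, whereas the paper verifies only the diagonal and leaves the remaining entries as ``straightforward.''
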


The conditions on $\alphavec$ and $\betavec$ ensure that $S_{(\lambdavec + k\muvec)/(\kappavec+k\nuvec)}$
is well-defined for $k\geq \max(\alpha_r-r,\beta_c-c).$
(Identity \eqref{eqn:minorisschur} is proven below in Prop.~\ref{prop:matrixisschur}, 
a similar identity is proven in \cite{bumbdiaconis}.)

To state our main first result, we need to define the following. 
Set $b:=\binom{n}{c-r}$ and define the finite sequence of polynomials $\{Q_i(x_1,\dots,x_n)\}_{i=0}^{b}$ by the identity
\begin{equation}\label{eq:characteristicequation}
\sum_{k=0}^{b} Q_{b-k} t^k = 
\prod_{\stackrel{\sigma \subseteq [n]}{|\sigma| = c-r }} (t-x_{\sigma_1} x_{\sigma_2}\cdots x_{\sigma_{c-r}}).
\end{equation}

\begin{theorem}\label{thm:rootproducts}
Given strictly increasing sequences $\alphavec, \betavec$ of positive integers of length $r$ resp. $c$ with $c\leq r,$ satisfying 
$\alpha_i\geq \beta_i$ for $i=1,2,\dots,r,$ we have  
\begin{equation}\label{eqn:charequation}
\sum_{k=0}^{b} Q_{b-k} \det(D_{\alphavec,\betavec}^{k+j}) =0 \quad \text{ for all } j\geq \max(\alpha_c-c,\beta_r-r).
\end{equation}
\end{theorem}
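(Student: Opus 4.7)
The plan is to combine the Schur polynomial interpretation of $\det D_{\alphavec,\betavec}^k$ from the preceding proposition with the Jacobi--Trudi formula, and then exploit the fact that the complete homogeneous symmetric polynomials $h_m$ are linear combinations of geometric sequences $(x_\ell^m)_m$. Concretely, by that proposition it suffices to prove the claim for $S_{(\lambdavec+k\muvec)/(\kappavec+k\nuvec)}$ with $\muvec=1^c$ and $\nuvec=1^r$ (padding $\kappavec,\nuvec$ with zeros so both shapes have length $c$). Jacobi--Trudi expresses this skew Schur polynomial as the $c\times c$ determinant whose $(i,j)$-entry is $h_{\lambda_i-\kappa_j-i+j+k(\mu_i-\nu_j)}$. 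Because $\mu_i-\nu_j$ vanishes for $j\le r$ and equals $1$ for $j>r$, the first $r$ columns are independent of $k$, whereas each of the last $c-r$ columns equals $v_j(k)=(h_{\lambda_i-i+j+k})_{i=1}^c$---a uniform index-shift by $k$ of a fixed sequence of $h$'s.

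Next, assuming momentarily that $x_1,\dots,x_n$ are distinct, I apply partial fractions to the generating function $\sum_{m\ge 0}h_m t^m=\prod_\ell(1-x_\ell t)^{-1}$ to write $h_m=\sum_{\ell=1}^n A_\ell\,x_\ell^m$ with $A_\ell=\prod_{p\ne \ell}x_\ell/(x_\ell-x_p)$. Substituting, each $k$-dependent column becomes $v_j(k)=\sum_\ell A_\ell\,x_\ell^{k+j}\,u_\ell$, where $u_\ell:=(x_\ell^{\lambda_i-i})_{i=1}^{c}$ depends on neither $j$ nor $k$. Expanding the $c\times c$ determinant by multilinearity in the last $c-r$ columns yields a sum indexed by tuples $(\ell_{r+1},\dots,\ell_c)\in[n]^{c-r}$; by antisymmetry only tuples with pairwise distinct entries survive, and grouping them by their underlying subset $\sigma$ produces the closed form
\[
\det D_{\alphavec,\betavec}^k=\sum_{|\sigma|=c-r} C_\sigma\,w_\sigma^k,\qquad w_\sigma:=\prod_{i\in\sigma}x_i,
\]
with $C_\sigma$ independent of $k$. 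Since \eqref{eq:characteristicequation} says that each $w_\sigma$ is a root of $\sum_{k=0}^b Q_{b-k}t^k$, this closed form immediately implies \eqref{eqn:charequation} whenever the $x_\ell$ are distinct; the distinctness hypothesis is then removed by noting that both sides of \eqref{eqn:charequation} are polynomials in $x_1,\dots,x_n$ and that the distinct locus is Zariski dense.

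The main obstacle I anticipate is bookkeeping rather than a conceptual difficulty: I will need to verify that the lower bound on $j$ in the theorem is exactly what is required to simultaneously (i)~apply the preceding proposition, (ii)~run Jacobi--Trudi on the shifted shapes, and (iii)~keep every $h$-index encountered in the multilinear expansion nonnegative, so that the geometric formula $h_m=\sum_\ell A_\ell x_\ell^m$ is valid for every matrix entry appearing. Once those ranges line up, the algebraic content of the proof---Jacobi--Trudi combined with the multilinearity and antisymmetry of the determinant, plus a density argument---is essentially routine.
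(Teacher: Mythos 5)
Your proposal is correct, but it takes a genuinely different route from the paper's. The paper stays entirely combinatorial: after the Schur interpretation it introduces a ``sequence insertion'' operation on skew tableaux, observes that a tableau of the shape at size $k$ differs from one at size $k-1$ by a single extra column consisting of $r$ skew boxes and $c-r$ filled boxes, and obtains the recurrence by inclusion--exclusion over the (possibly several) insertion sequences that produce a given tableau; the coefficients $Q_i$ enter as the signed elementary symmetric functions of the products $x_{\sigma_1}\cdots x_{\sigma_{c-r}}$ that implement this inclusion--exclusion. You instead derive a Widom-type closed form $\det D_{\alphavec,\betavec}^k=\sum_{|\sigma|=c-r}C_\sigma w_\sigma^k$ via the $h$-version of Jacobi--Trudi, partial fractions, and multilinearity in the $c-r$ columns that carry the $k$-dependence, after which the recurrence is immediate because each $w_\sigma$ is a root of $\sum_{k}Q_{b-k}t^k$. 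Your route buys an explicit generalization of Widom's formula (Theorem \ref{lemma:widomequiv}) to arbitrary $D_{\alphavec,\betavec}^k$ --- something the paper only alludes to in a remark, and which Widom's original formula (covering only $D_c^k$) does not supply --- at the cost of a genericity-plus-Zariski-density step that the combinatorial proof does not need. The bookkeeping you flag does close: the identity $\sum_\ell A_\ell x_\ell^m=h_m$ (with $h_m=0$ for $m<0$) holds for all $m\ge -(n-1)$, and for matrix size at least $\beta_c-c$ the smallest index occurring in a $k$-dependent column is $r+1-\beta_c+k\ge r+1-c\ge -(n-1)$, so the substitution is legitimate in exactly the range the theorem asserts. (Note that the theorem as printed transposes $r$ and $c$ relative to the rest of the paper; your reading, with $\muvec=\mathbf{1}^c$, $\nuvec=\mathbf{1}^r$ and $r\le c$, is the intended one.)
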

\noindent(Here, we use the convention that the determinant of an empty matrix is 1.)

\begin{remark}
For the case $D_c^k$, the existence of recurrence \eqref{eqn:charequation} was previously shown in \cite[Thm.~2]{pascallike}, 
but its length and coefficients were not given explicitly.
Also, Theorem \ref{thm:rootproducts} has close resemblance to a result given in \cite[Thm. 5.1]{hou}.
It is however unclear whether \cite{hou} implies Theorem \ref{thm:rootproducts}.
Additionally, in contrast to \cite{hou}, our proof of Thm.~\ref{thm:rootproducts} is short and purely combinatorial.
\end{remark}

To formulate the second result, define
\begin{equation}\label{eqn:chi1}
\chi(t) = \prod_{i=1}^n (t-x_i) = (-1)^{n} \sum_{i=0}^n  (-t)^{n-i} s_{i}(x_1,x_2,\dots,x_n). 
\end{equation}
We then have the following theorem, which is equivalent to Widom's formula:
\begin{theorem}\label{lemma:widomequiv}(Modified Widom's formula) 

If the zeros $x_1,x_2,\dots,x_n$ of $\chi(t)=0$ are distinct then, for every $k\geq 1,$ 
\begin{equation*}
\det D_c^k = \sum_{\tau} 
\prod_{i \in \tau} x_i^k
\prod_{ \stackrel{i \in \tau}{j \notin \tau} } \frac{x_i}{x_i-x_j},
\quad \tau \in \binom{[n]}{c}
\end{equation*}
\end{theorem}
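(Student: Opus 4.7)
The plan is to identify the minor $\det D_c^k$ with a Schur polynomial via Proposition~\ref{prop:matrixisschur}, and then to expand that Schur polynomial using the classical Jacobi bialternant formula. Specializing the Proposition with $\alphavec = \emptyset$ and $\betavec = (1,2,\dots,c)$ gives $\lambdavec = (0^c)$, $\kappavec = \emptyset$, $\muvec = (1^c)$, $\nuvec = \emptyset$, so
\[
\det D_c^k = S_{(k^c)}(x_1,\dots,x_n),
\]
the Schur polynomial of the rectangular partition with $c$ parts equal to $k$. The theorem therefore reduces to establishing the Schur identity
\[
S_{(k^c)}(x_1,\dots,x_n) = \sum_{\tau \in \binom{[n]}{c}} \prod_{i \in \tau} x_i^{k} \prod_{\substack{i \in \tau \\ j \notin \tau}} \frac{x_i}{x_i - x_j}.
\]

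To prove this identity, I would start from the bialternant presentation
\[
S_\lambda(x_1,\dots,x_n) = \frac{\det (x_i^{\lambda_j + n - j})_{1\leq i,j\leq n}}{\prod_{1\leq i<j\leq n}(x_i - x_j)}
\]
applied to $\lambda = (k^c,0^{n-c})$. The numerator matrix is block-structured in its columns: the first $c$ columns carry exponents $k+n-j$, while the last $n-c$ columns carry exponents $n-j$. A Laplace expansion along the first $c$ columns, indexed by the $c$-subset $\tau$ of rows, writes each summand as a Laplace sign times $\prod_{i\in\tau} x_i^{k+n-c}$ (pulled out row-by-row from the first block) times the sub-Vandermonde $V(x_\tau)$ of the selected rows and the sub-Vandermonde $V(x_{\tau^c})$ of the complementary rows. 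Since the global Vandermonde factors as $V(x_1,\dots,x_n) = \pm\,V(x_\tau)\,V(x_{\tau^c})\prod_{i\in\tau,\,j\notin\tau}(x_i - x_j)$, dividing the numerator by the denominator cancels the sub-Vandermondes and leaves $\prod_{i\in\tau} x_i^{k+n-c}/\prod_{i\in\tau,\,j\notin\tau}(x_i - x_j)$; rewriting $\prod_{i\in\tau} x_i^{n-c} = \prod_{i\in\tau,\,j\notin\tau} x_i$ yields the claimed expansion.

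The main obstacle is purely bookkeeping: one must check that the sign introduced by the Laplace expansion along the first $c$ columns is the same as the sign introduced when splitting $V(x_1,\dots,x_n)$ into $V(x_\tau)$, $V(x_{\tau^c})$, and the cross product $\prod_{i\in\tau,\,j\notin\tau}(x_i-x_j)$. Both signs are governed by the same inversion statistic between $\tau$ and its complement, so a direct parity count confirms the cancellation. Once this identity is in hand, the equivalence with Widom's original formula is immediate by substitution: the roots of $\psi(t) = \sum_{i=0}^n s_i t^i = \prod_{i=1}^n(1 + x_i t)$ are $t_i = -1/x_i$, and $s_n = x_1\cdots x_n$, so replacing $\sigma \in \binom{[n]}{n-c}$ by its complement $\tau \in \binom{[n]}{c}$ transforms $w_\sigma^k$ into $\prod_{i\in\tau}x_i^k$ and $C_\sigma$ into $\prod_{i\in\tau,\,j\notin\tau}\frac{x_i}{x_i-x_j}$ after the powers of $-1$ collapse.
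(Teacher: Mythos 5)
Your proof is correct, but it takes a different route from the paper at the key step. Both arguments begin identically, by specializing Proposition~\ref{prop:matrixisschur} to $\alphavec=\emptyset$, $\betavec=(1,\dots,c)$ to get $\det D_c^k = S_{(k^c)}(x_1,\dots,x_n)$. The paper then finishes in one line by quoting Macdonald's identity for Hall polynomials (Proposition~\ref{prop:macdonald}): for $\lambdavec=(k^c,0^{n-c})$ the coset sum over $\symS_n/\symS_n^{\lambdavec}$ is exactly the sum over $\tau\in\binom{[n]}{c}$, and the factor $\prod_{\lambda_i>\lambda_j}x_i/(x_i-x_j)$ becomes $\prod_{i\in\tau,\,j\notin\tau}x_i/(x_i-x_j)$. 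You instead reprove the needed special case of that identity from scratch, via the bialternant quotient and a Laplace expansion of $\det\bigl(x_i^{\lambda_j+n-j}\bigr)$ along the first $c$ columns; your sign bookkeeping is right, since both the Laplace sign and the sign in the factorization $V(x_{[n]})=\pm V(x_\tau)V(x_{\tau^c})\prod_{i\in\tau,\,j\notin\tau}(x_i-x_j)$ are governed by the inversion count $\#\{(i,j):i\in\tau,\ j\in\tau^c,\ j<i\}$, and the exponent count $\prod_{i\in\tau}x_i^{\,n-c}=\prod_{i\in\tau,\,j\notin\tau}x_i$ is also correct. What your approach buys is self-containedness: the result follows from the bialternant formula alone, with no appeal to Macdonald. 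What the paper's approach buys is brevity and the explicit observation that the modified Widom formula is literally a specialization of a classical symmetric-function identity, which is the point the author wants to make. Your closing paragraph on recovering Widom's original formula by the substitution $t_i=-1/x_i$ reproduces the paper's separate equivalence lemma and is likewise correct.
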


\begin{remark}
Below, we show that this (and therefore Widom's original formula) 
follows immediately from a known identity for the Hall polynomials.

Note that Theorem \ref{thm:rootproducts} can be verified easily using Widom's original formula.
I was informed that there is an unpublished result by S.~Delvaux and A.~L.~García which uses a Widom-type formula 
for block Toeplitz matrices to give recurrences similar to \eqref{eqn:charequation}.
\end{remark}

\section{Preliminaries}

Given two integer partitions $\lambdavec = (\lambda_1, \lambda_2,\dots,\lambda_n),$
$\muvec = (\mu_1, \mu_2,\dots,\mu_n)$ with $\lambda_1 \geq \lambda_2 \geq \dots \geq \lambda_n \geq 0,$
$\mu_1 \geq \mu_2 \geq \dots \geq \mu_n \geq 0,$
we say that $\lambdavec \supseteq \muvec$ if $\lambda_j\geq \mu_j$ for $j=1\dots n.$
Given two such partitions, one constructs the associated 
\emph{skew Young diagram}\footnote{In the case $\muvec = (0,0,\dots,0),$ the word \emph{skew} is to be omitted.} by having 
$n$ left-adjusted rows of boxes, where row $j$ contains $\lambda_j$ boxes,
and then removing the first $\mu_j$ boxes from row $j.$ The removed boxes is called the \emph{skew part} of the tableau.
\begin{example}
The following diagram is obtained from the partitions $(4,2,1)$ and $(2,2),$
and it is said to be of the \emph{shape} $(4,2,1)/(2,2)$:
$$\young(\blacksquare\blacksquare\hfil\hfil,\blacksquare\blacksquare,\hfil)$$
(We will omit/add trailing zeros in partitions when the intended length is known from the context.)
\end{example}
The \emph{conjugate} of a partition is the partition obtained by transposing the corresponding tableau.
For example, the conjugate of $(4,2,1)/(2,2)$ is $(3,2,1,1)/(2,2).$

Given such a diagram, a \emph{(skew) semi-standard Young tableau} (we will use just the word tableau from now on) is an 
assignment of positive integers to the boxes, such that each row is weakly increasing, and each column is strictly increasing.

We define the \emph{(skew) Schur polynomial} $S_{\lambdavec/\muvec}(x_1,x_2,\dots,x_n)$ as
$$S_{\lambdavec/\muvec}(x_1,x_2,\dots,x_n) = \sum x_1^{h_1} \cdots x_n^{h_n}$$
where the sum is taken over all tableaux of shape $\lambdavec/\muvec,$
and $h_j$ counts the number of boxes containing $j$ for each particular tableau.
No box may contain an integer greater than $n.$
When $\muvec = (0,0,\dots,0)$ we just write $S_\lambdavec.$ 
To clarify, each Schur polynomial is associated with a Young diagram, and each monomial in such polynomial
corresponds to a set of tableaux. We use this correspondence extensively.
For example, the tableau above yields the Schur polynomial
$$x_1^3+x_2^3+x_3^3 + 2( x_1^2 x_2 + x_1^2 x_3 + x_2^2 x_1 + x_2^2 x_3 + x_3^2 x_1 + x_3^2 x_2) + 3x_1 x_2 x_3.$$

The following formula express the (skew) Schur polynomials in a determinant form:
\begin{proposition}\label{prop:jacobitrudi}(Jacobi-Trudi identity \cite{macdonald})

Let $\lambdavec \supseteq \muvec$ be partitions with at most $n$ parts and let $\lambdavec',\muvec'$ be their conjugate partitions (with at most $k$ parts).
Then the (skew) Schur polynomial $S_{\lambdavec/\muvec}$ is given by
\begin{equation*}
S_{\lambdavec/\muvec}(x_1, x_2, \dots, x_n) = \begin{vmatrix}
s_{\lambda'_1-\mu'_1} & s_{\lambda'_1-\mu'_1+1} & \dots & s_{\lambda'_1-\mu'_1+k-1} \\
s_{\lambda'_2-\mu'_2-1} & s_{\lambda'_2-\mu'_2} & \dots & s_{\lambda'_2-\mu'_2+k-2}\\
\vdots &  &  \ddots & \vdots \\
s_{\lambda'_k-\mu'_k-k+1} &\dots && s_{\lambda'_k-\mu'_k} \\
\end{vmatrix}
\end{equation*}
where $s_j := s_j(x_1,\dots,x_n),$ the elementary symmetric functions in $x_1,\dots,x_n.$
Here, $s_{j}\equiv 0$ for $j<0.$
\end{proposition}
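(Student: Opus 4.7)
My plan is to prove the Jacobi--Trudi identity via the Lindström--Gessel--Viennot (LGV) lemma, encoding skew semistandard Young tableaux of shape $\lambdavec/\muvec$ as tuples of non-intersecting lattice paths (one path per column) and recognising the Jacobi--Trudi determinant as the signed LGV sum.

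First I would fix a path model in $\mathbb{Z}^2$: paths of unit east and north steps, taking at most one east step at each integer height $1, 2, \ldots, n$, with an east step at height $j$ weighted by $x_j$. Under this rule, the total weight of paths from $(a, 0)$ to $(b, n)$ equals $e_{b-a}(x_1, \ldots, x_n) = s_{b-a}$, because such a path is determined by a strictly increasing choice of heights at which to step east. Choosing starting points $A_i = (\mu'_i - i + 1, 0)$ and ending points $B_j = (\lambda'_j - j + 1, n)$ for $i, j = 1, \ldots, k$ produces a $k \times k$ matrix of weighted path counts whose $(i,j)$ entry is $s_{\lambda'_j - \mu'_i + i - j}$, and whose determinant agrees with that of the Jacobi--Trudi matrix in the proposition.

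The LGV lemma then rewrites this determinant as the signed sum over pairs $(\sigma, \text{path system})$ with $\sigma \in \symS_k$ and paths connecting $A_i$ to $B_{\sigma(i)}$, and the usual path-swapping involution cancels all contributions from path systems that contain a crossing. The geometry of the endpoints forces $\sigma = \mathrm{id}$ on any surviving (non-intersecting) configuration. Reading off the heights of the east steps of path $i$ as the entries of column $i$ then exhibits surviving path systems as a combinatorial image of skew SSYT of shape $\lambdavec/\muvec$.

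The main obstacle, and the only non-routine step, is verifying this last bijection between non-intersecting path systems and skew semistandard tableaux: column-strictness follows automatically from the at-most-one-east-step-per-height rule, but the weak-increase along rows must be matched to the non-crossing condition. The role of the $-i+1$ shifts in the coordinates of $A_i$ and $B_i$ is precisely to align the paths so that two adjacent paths fail to cross if and only if the corresponding column entries in the same row satisfy the required weak inequality. Once this translation is in place, the weight of each non-intersecting path system equals the monomial of its associated tableau, so summing yields $S_{\lambdavec/\muvec}(x_1, \ldots, x_n)$, completing the proof.
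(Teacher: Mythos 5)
The paper does not prove this proposition at all --- it is quoted from Macdonald as a known identity (the dual Jacobi--Trudi, or N\"agelsbach--Kostka, formula) --- so there is no in-paper argument to compare against; an LGV proof is the standard combinatorial route. Unfortunately, your path model breaks the argument at its central step. If the underlying digraph has unit north and east steps, then ``at most one east step at each height'' is not a property of paths in that digraph but an extra constraint imposed on them, and the tail-swapping involution does not respect it: when path $P$ reaches the shared vertex $(x,h)$ by an east step at height $h$ and path $P'$ leaves $(x,h)$ by an east step at height $h$, the swapped path takes two east steps at height $h$ and falls outside your path family. So the LGV lemma does not apply as stated, and the conclusion genuinely fails. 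Concretely, take $\lambdavec=(2)$, $\muvec=\emptyset$, so $\lambdavec'=(1,1)$, $k=2$, $A_1=(0,0)$, $A_2=(-1,0)$, $B_1=(1,n)$, $B_2=(0,n)$. The determinant is $s_1^2-s_2=h_2=S_{(2)}=\sum_{a\le b}x_ax_b$, but in your model the vertex-disjoint identity-permutation systems are exactly those with east-step heights $a<b$, contributing only $\sum_{a<b}x_ax_b=s_2$. The standard repair is to replace east steps by north-east diagonal steps, weighting the step into height $j$ by $x_j$: then every path in the digraph automatically moves east at distinct heights (so the weight from $(a,0)$ to $(b,n)$ is still $s_{b-a}$), tail-swapping always produces legal paths, and the non-intersecting condition between adjacent paths becomes precisely the weak row inequality $T(r,i)\le T(r,i+1)$ rather than a strict one. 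With that change your outline goes through.

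A second point you should not gloss over: the matrix you derive has $(i,j)$ entry $s_{\lambda'_j-\mu'_i+i-j}$, which is the transpose of the correct matrix $\bigl(s_{\lambda'_i-\mu'_j-i+j}\bigr)_{i,j}$ and so has the same determinant --- but it does \emph{not} agree with the matrix printed in the proposition, whose $(i,j)$ entry is $s_{\lambda'_i-\mu'_i+j-i}$, with $\mu'_i$ where the identity requires $\mu'_j$. These differ for genuinely skew shapes: for $\lambdavec=(2,1)$, $\muvec=(1)$ the printed matrix gives $s_1^2-s_2$ while $S_{(2,1)/(1)}=s_1^2$. This is a typo in the statement rather than an error of yours, but your claim that your matrix ``agrees with that of the Jacobi--Trudi matrix in the proposition'' is not literally true and should be replaced by the observation that it is the transpose of the standard dual Jacobi--Trudi matrix.
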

It is clear that every (skew) Schur polynomial $S_{\lambdavec/\muvec}(x_1,\dots,x_n)$
is symmetric in $x_1,\dots,x_n.$

\section{Proofs}
The following proposition shows that certain minors of banded Toeplitz 
matrices may be interpreted as Schur polynomials.
\begin{proposition}\label{prop:matrixisschur}
Let $D_{\alphavec,\betavec}^k$ be defined as above. 
Then,
$$\det D_{\alphavec,\betavec}^k = S_{(\lambdavec+k \muvec)/(\kappavec + k \nuvec)}(x_1,x_2,\dots,x_n)$$
where 
\begin{equation*}
\lambdavec=(1-\beta_1,2-\beta_2,\dots,c-\beta_c), \quad \kappavec =(1-\alpha_1,2-\alpha_2,\dots,r-\alpha_r)
\end{equation*}
and
\begin{equation*}
\muvec = (\underbrace{1,1,\dots,1}_c), \quad \nuvec = (\underbrace{1,1,\dots,1}_r).
\end{equation*}
\end{proposition}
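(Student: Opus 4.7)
The plan is to apply the Jacobi-Trudi identity (Proposition~\ref{prop:jacobitrudi}) to the right-hand side, obtaining a $k\times k$ determinant whose entries can then be matched to those of $D_{\alphavec,\betavec}^k$ after a simultaneous reversal of row and column orders combined with a transposition. To set up, let $I_1<\cdots<I_k$ (respectively $J_1<\cdots<J_k$) be the first $k$ positive integers not in $\alphavec$ (resp.\ $\betavec$). Directly from the definition of $D_{\alphavec,\betavec}^k$, its $(p,q)$-entry is $s_{J_q - I_p}$.

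On the Schur side, apply Jacobi-Trudi (in elementary-symmetric form) with the conjugate partitions padded with zeros to length $k$; this is permissible because $(\lambdavec+k\muvec)_1 = k+1-\beta_1 \le k$. The resulting determinant has $(p,q)$-entry $s_{(\lambdavec+k\muvec)'_p - (\kappavec+k\nuvec)'_q + q - p}$.

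The heart of the proof is the pair of identities
\[
J_{k+1-p} + p - (\lambdavec+k\muvec)'_p = k+1 \quad\text{and}\quad I_{k+1-q} + q - (\kappavec+k\nuvec)'_q = k+1.
\]
For the first, unfold $(\lambdavec+k\muvec)'_p = \#\{i\in[c] : \beta_i - i \le k-p\}$. The elementary observation that $\beta_i \le J_m \iff \beta_i - i \le m-1$ (which uses $\beta_i\in\betavec$ while $J_m\notin\betavec$) rewrites this count as $\#\{i : \beta_i \le J_{k+1-p}\}$, and by the defining property $J_m - m = \#\{i:\beta_i\le J_m\}$ this equals $J_{k+1-p} - (k+1-p)$, yielding the identity. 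The second identity is proved identically with $(\alphavec,\kappavec,I)$ in place of $(\betavec,\lambdavec,J)$. Substituting both into the Jacobi-Trudi entry turns it into $s_{J_{k+1-p} - I_{k+1-q}}$, which is exactly the $(p,q)$-entry of the matrix obtained from $(s_{J_q-I_p})$ by reversing both orderings and transposing; these operations multiply the determinant by $(-1)^{k(k-1)}\cdot 1 = +1$.

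The main obstacle I expect is the combinatorial identity just sketched, which requires careful handling of the strict/non-strict distinction ($\beta_i\in\betavec$ versus $J_m\notin\betavec$) together with the edge case $p>k+1-\beta_1$, where $(\lambdavec+k\muvec)'_p=0$ and the identity reduces to $J_m=m$ for $m<\beta_1$—true because $1,2,\dots,\beta_1-1$ are automatically absent from $\betavec$. Everything else is routine bookkeeping; the hypothesis $k\ge\max(\alpha_r-r,\beta_c-c)$ ensures the skew shape is well-defined.
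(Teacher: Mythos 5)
Your proof is correct and follows essentially the same route as the paper's: apply the dual Jacobi--Trudi identity to the conjugate partitions, use the counting identity relating $(\lambdavec+k\muvec)'$ and $(\kappavec+k\nuvec)'$ to the complements of $\betavec$ and $\alphavec$, and finish with an anti-diagonal transpose (determinant-preserving) to match $D_{\alphavec,\betavec}^k$. If anything, your version is slightly more complete, since you match \emph{all} entries via the correct skew entry $s_{\lambda'_p-\mu'_q+q-p}$ and handle the edge cases explicitly, whereas the paper verifies only the diagonal and asserts the rest is straightforward.
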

\begin{proof}
Consider the matrix $A$ defined in \eqref{eqn:A}, where the indices (of $s$) on the main diagonal are all 0.
Now, removing the rows $\alphavec$ will \emph{decrease} the index on row $i$
by $\#\{j|\alpha_j-j+1\leq i\}.$
Similarly, removing the columns $\betavec$ will \emph{increase}
the index in column $i$ by $\#\{j|\beta_j-j+1\leq i\}.$
\emph{After} removing rows and columns, the diagonal of the resulting matrix, $\tilde{A}$, is given by
\begin{equation*}
\left( \#\{j|\beta_j-j+1\leq i\} - \#\{j|\alpha_j-j+1\leq i\} \right)_{i=1}^\infty.
\end{equation*}
Now, the leading $k \times k$ minor of $\tilde{A}$ is $D_{\alphavec,\betavec}^k$ and its anti-diagonal transpose
has the same determinant as $D_{\alphavec,\betavec}^k.$ The main diagonal in the anti-diagonal transposed matrix equals
\begin{equation}
\begin{aligned}\label{eqn:matrixdiag}
&\left( \#\{j|\beta_j-j+1\leq k-i+1\} - \#\{j|\alpha_j-j+1\leq k-i+1\} \right)_{i=1}^k = & \\
&\left( \#\{j|\beta_j\leq k+j-i\} - \#\{j|\alpha_j\leq k+j-i\} \right)_{i=1}^k &
\end{aligned}
\end{equation}

Now, well-known properties of partition conjugation imply
\begin{equation*}
\begin{split}
(\lambdavec + k \muvec)'= (\#\{j| k+j-\beta_j\geq 1\},\#\{j| k+j-\beta_j\geq 2\},\dots, \#\{j| k+j-\beta_j\geq k\}), \\
(\kappavec + k \nuvec)'= (\#\{j| k+j-\alpha_j\geq 1\},\#\{j| k+j-\alpha_j\geq 2\},\dots, \#\{j| k+j-\alpha_j\geq k\}).
\end{split}
\end{equation*}
Rewriting this we obtain
\begin{equation*}
(\lambdavec + k \muvec)'= (\#\{j| \beta_j \leq k+j-i\})_{i=1}^k, \quad (\kappavec + k \nuvec)'= (\#\{j| \alpha_j \leq k+j-i \})_{i=1}^k.
\end{equation*}
Finally, using $(\kappavec + k \nuvec)/(\lambdavec + k \muvec)$ in the Jacobi-Trudy identity, Prop.~\ref{prop:jacobitrudi},
yields a $k \times k-$matrix with diagonal entries
\begin{equation*}
(\lambdavec + k \muvec)'-(\kappavec + k \nuvec)' =  (\#\{j| \beta_j \leq k+j-i\} - \#\{j| \alpha_j \leq k+j-i \})_{i=1}^k.
\end{equation*}
This expression coincides with the expression for $\det D_{\alphavec,\betavec}^k$ in \eqref{eqn:matrixdiag},
and now it is straightforward to see that all other matrix entries coincides as well.
\end{proof}

\subsection{Young tableaux and sequence insertion}

To prove Theorem \ref{thm:rootproducts}, we need to define a new combinatorial operation on semi-standard skew Young tableaux.
Namely, given a tableau $T$ with $n$ rows, we define an insertion of a sequence $\tvec = t_1<t_2<\dots<t_c$
into $T$ as follows. Each $t_i$ is inserted into row $i,$
such that the resulting row is still weakly increasing. (Clearly, there is a unique way to do this.)
If there is no row $i,$ we create a new left-adjusted row consisting of one box which contains $t_i.$
We call this operation \emph{sequence insertion} of $\tvec$ into $T.$

\begin{lemma}
The result of sequence insertion is a tableau. 
\end{lemma}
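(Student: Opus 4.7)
The statement has two parts to verify: weak increase along each row, and strict increase down each column. The row condition is built into the definition of sequence insertion, so the entire proof reduces to checking that columns remain strictly increasing after the $c$ insertions.

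The plan is to work locally, comparing two consecutive rows $i$ and $i+1$ at a time. Let the entries of these rows (indexed by column position, accounting for any skew offset) be $a_1 \le \cdots \le a_m$ and $b_1 \le \cdots \le b_\ell$, with original column strictness $a_j < b_j$ wherever both are defined. Let $p$ be the column into which $t_i$ is inserted in row $i$, and $q$ the column into which $t_{i+1}$ is inserted in row $i+1$. The insertion rule gives $a_{p-1} \le t_i \le a_p$ and $b_{q-1} \le t_{i+1} \le b_q$ (with the conventions $a_0 = b_0 = -\infty$ and $a_{m+1} = b_{\ell+1} = +\infty$), and $t_i < t_{i+1}$ by hypothesis.

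The verification splits into three cases according to whether $p=q$, $p<q$, or $p>q$. In every case, the inserted entries occupy columns $p$ and $q$ of the new array while the remaining columns shift by one, and each required column inequality reduces to a short chain combining the insertion bounds, the original column strictness of $T$, and $t_i < t_{i+1}$. The cases $p=q$ and $p<q$ are routine; the subtlest case is $p>q$, where I would use the master chain
\[
a_q \le a_{p-1} \le t_i < t_{i+1} \le b_q \le b_{p-1}
\]
obtained from row monotonicity (using $q<p$ so that $a_q \le a_{p-1}$ and $b_q \le b_{p-1}$) together with the two insertion bounds and the hypothesis on $\mathbf{t}$. This single chain simultaneously controls the two critical new columns $q$ and $p$; the intermediate columns $q < j < p$ are handled by the analogous chain $a_j \le a_{p-1} \le t_i < t_{i+1} \le b_q \le b_{j-1}$.

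The main obstacle is really bookkeeping rather than mathematics: one must handle the boundary columns where $t_i$ is appended at the end of a row, and the degenerate possibility that row $i$ does not yet exist, so that a fresh one-box row containing $t_i$ is created. In the latter case the new row carries only $t_i$ in its leftmost available column, and the only column comparison to make is against the adjacent row, which is either controlled by the existing column strictness of $T$ together with $t_{i-1} < t_i < t_{i+1}$, or else is itself a freshly created one-box row whose entry differs from $t_i$ by construction. Once these conventions are in place, the three-case analysis applies uniformly and the resulting array is a legitimate semi-standard skew tableau.
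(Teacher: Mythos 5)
Your proposal is correct and takes essentially the same route as the paper's proof: both reduce the problem to column-strictness between two adjacent rows and split into the same three cases according to the relative column positions of $t_i$ and $t_{i+1}$ ($p=q$, $p<q$, $p>q$), settling each by a short chain combining the insertion bounds, the original column-strictness, and $t_i<t_{i+1}$. Your handling of the boundary and new-row situations is, if anything, slightly more explicit than the paper's.
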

\begin{proof}
It is clear that it suffices to check that the resulting columns are strictly increasing.
Furthermore, it suffices to show that any two boxes in adjacent rows are strictly increasing.
Let us consider rows $i$ and $i+1$ after inserting $t_i$ and $t_{i+1},$ $t_i<t_{j+1}.$ 
There are three cases to consider:

\textbf{Case 1:}
The numbers $t_i$ and $t_{i+1}$ are in the same column:
$$
\begin{bmatrix}
\cdots & a_1 & t_i &  a_2 & \cdots & a_m & \cdots \\
\cdots & b_1 & t_{i+1} & b_2 & \cdots & b_{m} & \cdots \\
\end{bmatrix}
$$
Since $t_i<t_{i+1},$ and all the other columns are unchanged, the columns are strictly increasing.

\textbf{Case 2:}
The number $t_i$ is to the right of $t_{i+1}$:
$$
\begin{bmatrix}
\cdots & t_i & a_1 & a_2 & \cdots & a_{m-1} & a_m & \cdots \\
\cdots & b_1 & b_2 & b_3 & \cdots & b_{m}   & t_{i+1} & \cdots \\
\end{bmatrix}
$$
The columns where strictly increasing before the insertion.
Therefore, $t_i\leq a_1 < b_1,$ $a_m < b_m \leq t_{i+1}$ and $a_j<b_j\leq b_{j+1}.$
It follows that all the columns are strictly increasing.

\textbf{Case 3:}
The number $t_i$ to the left of $t_{i+1}$:
$$
\begin{bmatrix}
\cdots & a_1 & a_2 & \cdots & a_{m-1} & a_m & t_i & \cdots \\
\cdots & t_{i+1} & b_1 & b_2 & b_3 & \cdots & b_{m} & \cdots \\
\end{bmatrix}
$$
We have that $a_j\leq t_i < t_{i+1}\leq b_k$ for $1\leq j,k\leq m,$ since the rows are increasing.
Thus, it is clear that all the columns are strictly increasing. 
It is easy to see that the result is a tableau even if $c\neq n.$
\end{proof}
Notice that different sequence insertions commute, i.e., inserting sequence $\svec$ into $T$ followed by $\tvec$,
yields the same result as the reverse order of insertion.

We may extend the notion of sequence insertion to skew tableaux as follows: 
First put negative integers in the skew part, such that the negative integers in each particular row have the same value,
and the columns are strictly increasing.
The result is a regular tableau, (but with some negative entries), so we may perform sequence insertion.
The negative entries still form a skew part of the tableau, and we may remove these to obtain a skew tableau.

Note that we may also allow negative entries in a sequence, which after insertion, are removed. 
The result is a skew tableau. The following example illustrates this:

\begin{example}
Here, we insert the sequence $(-1,2,3)$ into a skew tableau of shape $(4,3,3,2)/(2,1,1):$
$$
\Yvcentermath1
\young(\blacksquare\blacksquare11,\blacksquare12,\blacksquare34,14)
\rightarrow 
\young(\blacksquare\blacksquare\blacksquare11,\blacksquare122,\blacksquare334,14)
$$
\end{example}

\begin{lemma}\label{lem:insertion} 
Let $S_{\lambdavec/\muvec}(x_1,\dots,x_n)$ be a (skew) Schur polynomial. Then, for any $k \geq 0,$
the coefficient of $x_1^{h_1} \dots x_n^{h_n}$ in $x_{t_1} \dots x_{t_c} S_{\lambdavec/\muvec}$
with $0<t_1<t_2<\dots<t_c$ counts the number of ways to insert the sequence $(-k,\dots,-2,-1,t_1,t_2,\dots,t_c)$ into a 
skew tableau of shape $\lambdavec/\muvec$ such that the resulting (skew) tableau has exactly $h_i$ boxes with value $i.$
\end{lemma}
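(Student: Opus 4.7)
The plan is to interpret both sides as counts of skew semistandard tableaux and check that sequence insertion converts one count into the other.

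First I would unpack the coefficient. Let $\epsilon_i := 1$ if $i \in \{t_1,\ldots,t_c\}$ and $0$ otherwise. Because the $t_j$ are strictly increasing, multiplying by $x_{t_1}\cdots x_{t_c}$ shifts the exponent vector of each monomial by $\epsilon$, so the coefficient of $x_1^{h_1}\cdots x_n^{h_n}$ in $x_{t_1}\cdots x_{t_c}\, S_{\lambdavec/\muvec}$ equals the coefficient of $x_1^{h_1-\epsilon_1}\cdots x_n^{h_n-\epsilon_n}$ in $S_{\lambdavec/\muvec}$. By the combinatorial definition of the skew Schur polynomial, this is the number of skew semistandard tableaux of shape $\lambdavec/\muvec$ whose positive content is $(h_1-\epsilon_1,\ldots,h_n-\epsilon_n)$.

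Next I would track the content after insertion. By definition the $i$-th term of $(-k,\ldots,-1,t_1,\ldots,t_c)$ is inserted into row $i$, so the negatives land in rows $1,\ldots,k$ and each $t_j$ lands in row $k+j$. Since every inserted negative is strictly smaller than every positive entry of the starting tableau $T$, it slides to the left of the positive part of its row and merely enlarges the skew shape, leaving all positive entries undisturbed. Each positive $t_j$, by contrast, is inserted among the positive entries of row $k+j$ (or starts a new singleton row) and adds exactly one fresh box of value $t_j$; since the $t_j$ are distinct these increments hit distinct coordinates. Hence the positive content of the inserted tableau is precisely the content of $T$ plus $\epsilon$. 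Because insertion is deterministic given $T$, the number of ways to insert so that the output has positive content $(h_1,\ldots,h_n)$ equals the number of starting tableaux $T$ of shape $\lambdavec/\muvec$ with content $(h_1-\epsilon_1,\ldots,h_n-\epsilon_n)$, matching the coefficient computed in the first step.

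The main obstacle is simply this content-shift bookkeeping: one must verify that no inserted negative ever lands in a positive cell and that each $t_j$ truly contributes a new box rather than being absorbed into an existing one. Both points follow immediately from the insertion rule, together with the preceding lemma ensuring that the output is always a valid tableau. Once these checks are pinned down, the two counts agree and the lemma follows.
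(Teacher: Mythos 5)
Your argument is correct and is essentially the paper's proof written out in full: the paper disposes of the lemma in one sentence by noting that sequence insertion into a given tableau is unique, and you supply exactly the content-shift bookkeeping that makes that observation into a bijection between tableaux of content $(h_1-\epsilon_1,\dots,h_n-\epsilon_n)$ and insertions producing content $(h_1,\dots,h_n)$. No substantive difference in approach, just a more explicit level of detail.
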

\begin{proof}
Since there is exactly one way to insert a sequence into a (skew) tableau, the equality is clear. 
\end{proof}
Expressing Schur polynomials and products of the form $x_{t_1} \dots x_{t_c} S_{\lambdavec/\muvec}$
as a sum of \emph{monic} monomials, we have a 1-1-correspondence between a monic monomials and tableaux. 
Thus, in what follows, we may sloppily identify these two objects when proving Theorem \ref{thm:rootproducts}:
\begin{proof}[Proof of Theorem \ref{thm:rootproducts}]
We may assume that $\alpha_i\geq \beta_i$ for $i=1,\dots,r.$ Otherwise, all determinants vanish,
and the identity is trivially true. With these assumptions we may use the Schur polynomial interpretation.

Let $b:=\binom{n}{c-r}$ and let $j \geq \max(r-\alpha_r, c-\beta_c).$ 
Rewriting \eqref{eq:characteristicequation} using identity \eqref{eqn:minorisschur} yields 
\begin{equation}\label{eq:schuridentity2}
S_{(\lambdavec + (b+j)\muvec)/(\kappavec + (b+j)\nuvec)} = \sum_{k=0}^{b-1} Q_{b-k} S_{(\lambdavec + (k+j)\muvec)/(\kappavec + (k+j)\nuvec)}.
\end{equation}

Now, notice that the difference between tableaux of shape $(\lambdavec + k\muvec)/(\kappavec + k\nuvec)$
and tableaux of shape $(\lambdavec + (k-1)\muvec)/(\kappavec + (k-1)\nuvec)$ is that the former
contains an extra column of the form 
$$\underbrace{\blacksquare,\dots,\blacksquare}_r \underbrace{\square,\dots,\square}_{c-r}.$$
Therefore, each tableau of shape $(\lambdavec + k\muvec)/(\kappavec + k\nuvec)$, ($k > \max(r-\alpha_r, c-\beta_c)$)
may be obtained from some tableau of shape $(\lambdavec + (k-1)\muvec)/(\kappavec + (k-1)\nuvec)$ by inserting
a sequence of the form 
$$(-r,\dots,-1,t_1,t_2,\dots,t_{c-r}).$$
Together with Lemma \ref{lem:insertion}, this implies that all 
tableaux\footnote{monic monomials} in $S_{(\lambdavec + (b+j)\muvec)/(\kappavec + (b+j)\nuvec)}$ 
are also tableaux\footnotemark[\value{footnote}] in
\begin{equation}\label{eq:step21}
Q_{1} S_{(\lambdavec + (b+j-1)\muvec)/(\kappavec + (b+j-1)\nuvec)}.
\end{equation}
Hence, there is almost an equality between $S_{(\lambdavec + (b+j)\muvec)/(\kappavec + (b+j)\nuvec)}$ and \eqref{eq:step21}, 
but some tableaux in $S_{(\lambdavec + (b+j)\muvec)/(\kappavec + (b+j)\nuvec)}$ may be obtained by using 
different sequence insertions.
Those tableaux are exactly the tableaux that may be obtained by using 
$$S_{(\lambdavec + (b+j-2)\muvec)/(\kappavec + (b+j-2)\nuvec)}$$ 
using two \emph{different} sequence insertions.

Thus, $S_{(\lambdavec + (b+j)\muvec)/(\kappavec + (b+j)\nuvec)}$ is almost given by
$$Q_{1} S_{(\lambdavec + (b+j-1)\muvec)/(\kappavec + (b+j-1)\nuvec)} +  Q_{2} S_{(\lambdavec + (b+j-2)\muvec)/(\kappavec + (b+j-2)\nuvec)}.$$
(Multiplying with $Q_{2}$ can be viewed as performing all possible pairs of two different sequence insertions, and then there is a sign.)

Repeating this reasoning using inclusion/exclusion yields \eqref{eq:schuridentity2}.
\end{proof}

\begin{remark}
Note that the technical condition $j \geq \max(\alpha_r-r,\beta_c-c)$ in \eqref{eqn:charequation} is indeed necessary. 
For example, with $n=2$, $\{\det D_{(),(2)}^k\}_{k=0}^2$ do not satisfy the recurrence but $\{\det D_{(),(2)}^k\}_{k=1}^3$ do:
$$x_1 x_2 \cdot 1 - (x_1 + x_2) \begin{vmatrix} 1 \end{vmatrix} + 1 \begin{vmatrix} 1 & x_1 x_2 \\ 0 & 1 \end{vmatrix} \neq 0$$
but
$$x_1 x_2 \cdot \begin{vmatrix} 1 \end{vmatrix} -
(x_1 + x_2) \begin{vmatrix} 1 & x_1 x_2 \\ 0 & 1 \end{vmatrix} + 
1 \begin{vmatrix} 1 & x_1  x_2 & 0\\ 0 & x_1 + x_2 & x_1  x_2 \\ 0 & 1 & x_1 + x_2 \end{vmatrix} = 0.$$

This circumstance is a clear distinction of our result to the result in \cite{hou}, 
where the corresponding recurrence (for a slightly different type of objects) does not need such additional restriction.
\end{remark}

\subsection{Widom's formula}

We will now show that Theorem \ref{lemma:widomequiv} is equivalent to Widom's formula.
\begin{lemma}
Theorem \ref{lemma:widomequiv} is equivalent to Widom's formula \eqref{eqn:widom}.
\end{lemma}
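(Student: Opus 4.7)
The plan is to link the roots $t_i$ of $\psi(t)$ used in Widom's formula \eqref{eqn:widom} to the roots $x_i$ of $\chi(t)$ used in Theorem \ref{lemma:widomequiv} by an explicit substitution, then verify that the two sums on the right-hand sides agree term by term.

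First I would specialize to $s_i := e_i(x_1,\dots,x_n)$, which is precisely the convention defining $\chi$ in \eqref{eqn:chi1}. Combined with the running assumption $s_0=1$, this forces $s_n = x_1\cdots x_n$, and direct coefficient comparison yields the factorization
\[
\psi(t) = \sum_{i=0}^n e_i(x_1,\dots,x_n)\, t^i = \prod_{i=1}^n (1 + x_i t).
\]
Hence the zeros of $\psi$ are $t_i = -1/x_i$ (after relabeling), and they are distinct iff the $x_i$ are; conversely, any admissible data for Widom's formula arises this way by setting $x_i := -1/t_i$. So the hypotheses of the two theorems are in bijective correspondence, and it suffices to check that the right-hand sides match under $t_i = -1/x_i$.

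Next I would substitute $t_i = -1/x_i$ into Widom's sum and relabel $\sigma \mapsto \tau := [n] \setminus \sigma$, converting $\sigma\in\binom{[n]}{n-c}$ into $\tau\in\binom{[n]}{c}$. Using $s_n = \prod_j x_j$, the two factors of $(-1)^{n-c}$ in $w_\sigma$ cancel to give $w_\sigma = \prod_{j\in\tau} x_j$, so $w_\sigma^k = \prod_{i\in\tau} x_i^k$. For $C_\sigma$, the identity $t_j - t_i = (x_j - x_i)/(x_i x_j)$ turns each $(t_j - t_i)^{-1}$ into $x_i x_j/(x_j - x_i)$. Collecting the powers of $x_j$ for $j\in\sigma$, they cancel exactly against $\prod_{i\in\sigma} t_i^c = (-1)^{c(n-c)} \prod_{i\in\sigma} x_i^{-c}$, leaving
\[
C_\sigma = (-1)^{c(n-c)} \prod_{i\in\tau} x_i^{n-c} \prod_{\substack{i\in\tau\\ j\notin\tau}} (x_j - x_i)^{-1}.
\]

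Finally, I would rewrite the right-hand side of Theorem \ref{lemma:widomequiv} by splitting $\prod_{i\in\tau,\,j\notin\tau} x_i/(x_i-x_j)$ as $\prod_{i\in\tau} x_i^{n-c} \cdot \prod_{i\in\tau,\,j\notin\tau}(x_i-x_j)^{-1}$ and then flipping the sign of each of the $c(n-c)$ differences in the denominator. This produces the same factor $(-1)^{c(n-c)}$ as above, and the resulting summand agrees with $C_\sigma w_\sigma^k$ term by term, establishing the equivalence. The main obstacle is purely bookkeeping—keeping track of signs from the substitution and of exponents from the double products—but once the map $t_i = -1/x_i$ and the complementary relabeling $\sigma \leftrightarrow \tau$ are fixed, the computation is routine.
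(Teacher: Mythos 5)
Your proposal is correct and follows essentially the same route as the paper: substitute $t_i = -1/x_i$ (justified by $(-t)^n\psi(-1/t)=\chi(t)$), pass to the complementary index set $\tau=[n]\setminus\sigma$, and cancel the powers $x_j^{\pm c}$ and the signs arising from the $c(n-c)$ differences. The sign and exponent bookkeeping in your computation checks out and matches the paper's.
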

\begin{proof}
It is clear from \eqref{eqn:chi1} that $(-t)^n \psi(-1/t)=\chi(t),$ so the roots of these polynomials are related by $t_i = -1/x_i.$
Substituting $t_i \mapsto -1/x_i$
in \eqref{eqn:widom} and canceling signs yields
\begin{equation*}\label{eqn:bkwrec}
\det D_c^k = \sum_{\sigma} 
\left( \frac{s_n}{ \prod_{i \in \sigma} x_i } \right)^k 
\left(\prod_{i \in \sigma} x_i^{-c} \right) \prod_{\stackrel{j\in \sigma}{i \notin \sigma}} \left(\frac{1}{x_j} - \frac{1}{x_i}\right)^{-1}.
\end{equation*}
Using that $s_n = x_1 x_2 \cdots x_n$ and rewriting the last product, we get
\begin{equation*}
\det D_c^k = \sum_{\sigma} 
 \prod_{i \notin \sigma} x_i ^k 
\left(\prod_{i \in \sigma} x_i^{-c} \right) 
\prod_{\stackrel{j\in \sigma}{i \notin \sigma}} x_j \left(\frac{x_i}{x_i-x_j}\right).
\end{equation*}
Now notice that the last product produces $x_j^{c}$, since $|[n]\setminus \sigma| = c.$ 
Thus, we may cancel these with the middle product. Finally, putting $\tau = [n]\setminus \sigma$ yields the desired identity.
\end{proof}

Thus, to prove Widom's formula, it suffices to prove Theorem \ref{lemma:widomequiv}.
However, it is a direct consequence of the following identity:

\begin{proposition}\label{prop:macdonald}(Identity for Hall polynomials, \cite[p.~104, eqn.~(2.2)]{macdonald})

The Schur polynomial $S_\lambda(x_1,\dots,x_n)$ satisfy
\begin{equation*}
S_\lambda(x_1,\dots,x_n) = 
\sum_{w \in \symS_n/\symS_n^\lambda} w\left( x_1^{\lambda_1}\dots x_n^{\lambda_n} \prod_{\lambda_i > \lambda_j} \frac{x_i}{x_i-x_j} \right) 
\end{equation*}
where $\symS_n^\lambda$ is the subgroup of permutations with the property that $\lambda_{w(j)}=\lambda_j$ for $j=1\dots n,$
and $w$ acts on the indices of the variables.
\end{proposition}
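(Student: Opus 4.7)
The plan is to reduce Proposition \ref{prop:macdonald} to the classical bialternant (Jacobi) formula
$$S_\lambdavec(x_1,\dots,x_n) = \frac{a_{\lambdavec+\delta}}{a_\delta}, \qquad a_\muvec := \det\bigl(x_i^{\mu_j}\bigr)_{i,j=1}^n, \qquad \delta := (n-1,n-2,\dots,1,0).$$
Since the identity is cited verbatim from Macdonald's book, the proposal is really to reconstruct the short argument that links the coset sum on the right-hand side of the proposition to this determinantal form.

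I would handle the strict case first, i.e.\ when $\lambdavec$ has distinct parts. Then $\symS_n^\lambdavec$ is trivial, the coset sum collapses to a sum over all of $\symS_n$, and the index set $\{(i,j) : \lambda_i > \lambda_j\}$ coincides with $\{(i,j) : i < j\}$. The two elementary identities
$$a_\delta = \prod_{i<j}(x_i - x_j), \qquad x^\delta = \prod_{i<j} x_i$$
combine to give
$$x^\lambdavec \prod_{i<j} \frac{x_i}{x_i - x_j} = \frac{x^{\lambdavec+\delta}}{a_\delta}.$$
Since $a_\delta$ is alternating, i.e.\ $w(a_\delta) = \mathrm{sgn}(w)\,a_\delta$, applying $w$ and summing over $\symS_n$ yields
$$\sum_{w \in \symS_n} w\!\left(\frac{x^{\lambdavec+\delta}}{a_\delta}\right) = \frac{1}{a_\delta}\sum_w \mathrm{sgn}(w)\, w(x^{\lambdavec+\delta}) = \frac{a_{\lambdavec+\delta}}{a_\delta} = S_\lambdavec,$$
which is the desired identity in this case.

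For a general partition $\lambdavec$ with repeated parts, the product $\prod_{i<j}\frac{x_i}{x_i-x_j}$ would involve singular factors whenever $\lambda_i = \lambda_j$, which is exactly why the statement restricts the product to $\lambda_i > \lambda_j$ and the sum to cosets $\symS_n/\symS_n^\lambdavec$. I would pass to the general case by a perturbation/limit argument: deform $\lambdavec$ to the strict partition $\lambdavec + \varepsilon\delta$, apply the strict case, and let $\varepsilon \to 0$. The pre-limit factors indexed by pairs with $\lambda_i = \lambda_j$ become singular, but within each stabiliser coset $\symS_n^\lambdavec \cdot w$ the $|\symS_n^\lambdavec|$ contributions organise into a symmetric combination whose singularities cancel. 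What survives is precisely the coset term in the proposition: the product restricted to the strict inequalities $\lambda_i > \lambda_j$, the outer sum re-indexed by $\symS_n/\symS_n^\lambdavec$.

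The main obstacle is carrying out this cancellation step rigorously — the singularities are genuine, and one must verify (either by a L'H\^{o}pital-style reduction on the Vandermonde quotient, or by matching coefficients directly as in Macdonald's treatment via Hall--Littlewood polynomials at $t=0$) that the limit produces precisely the claimed expression. For the paper's application to Theorem \ref{lemma:widomequiv} the relevant partition is $\lambdavec = (k,\dots,k,0,\dots,0)$ with $c$ copies of $k$, which is highly non-strict, so the general case genuinely is needed and the reduction from the strict case cannot be avoided.
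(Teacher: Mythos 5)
First, note that the paper itself offers no proof of this proposition: it is quoted verbatim from Macdonald (it is the $t=0$ specialisation of the Hall--Littlewood expansion, eqn.~(2.2) on p.~104), so your reconstruction is being compared against a citation rather than an argument. Your treatment of the strict case is correct and is the standard one: for distinct parts the stabiliser is trivial, $x^{\delta}=\prod_{i<j}x_i$ and $a_\delta=\prod_{i<j}(x_i-x_j)$ turn the summand into $x^{\lambdavec+\delta}/a_\delta$, and antisymmetry of $a_\delta$ collapses the $\symS_n$-sum to the bialternant quotient $a_{\lambdavec+\delta}/a_\delta=S_{\lambdavec}$.

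The general case, however, is where your proposal has a genuine gap, and the perturbation argument you sketch is misdirected. If you deform the exponent to $\lambdavec+\varepsilon\delta$, the only $\varepsilon$-dependence sits in the monomial $x^{\lambdavec+\varepsilon\delta}$; the factors $x_i/(x_i-x_j)$ do not depend on $\varepsilon$ at all, so nothing ``becomes singular'' as $\varepsilon\to 0$ and there is no cancellation of poles to organise. (The poles along $x_i=x_j$ are present already in the strict case and are removable in the full sum; they are not the issue.) What is actually needed is purely algebraic, with no limit: the bialternant identity $\sum_{w\in\symS_n}w\bigl(x^{\lambdavec}\prod_{i<j}x_i/(x_i-x_j)\bigr)=a_{\lambdavec+\delta}/a_\delta=S_{\lambdavec}$ holds verbatim for \emph{any} partition, repeated parts or not. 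One then groups the full-group sum into cosets of $\symS_n^{\lambdavec}$, observes that $x^{\lambdavec}$ and the product over pairs with $\lambda_i>\lambda_j$ are $\symS_n^{\lambdavec}$-invariant, and is left with the inner sum $\sum_{v\in\symS_m}v\bigl(\prod_{i<j}y_i/(y_i-y_j)\bigr)=1$ applied blockwise to each group of equal parts (this is Macdonald's normalisation constant $v_m(t)$ evaluated at $t=0$, where it equals $1$; for $m=2$ it is the identity $\tfrac{y_1}{y_1-y_2}+\tfrac{y_2}{y_2-y_1}=1$). That lemma is the missing idea; once it is stated and proved (by induction or partial fractions), the coset form follows immediately. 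Since the application in Theorem~\ref{lemma:widomequiv} uses $\lambdavec=(k,\dots,k,0,\dots,0)$, which is maximally non-strict, this is precisely the case your proposal leaves unproved.
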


\begin{proof}[Proof of Thm.~\ref{lemma:widomequiv}]
Let $\lambdavec = (k,\dots,k,0,\dots,0)$ with $c$ entries equal to $k.$
Then $\symS_n^\lambda$ is the subgroup consisting of permutations, permuting the first $c$ variables,
and the last $n-c$ variables independently. 
The condition $\lambda_i > \lambda_j$ will only be satisfied if $\lambda_i=k$ and $\lambda_j=0.$ 
Therefore Prop.~\ref{prop:macdonald} immediately implies Theorem \ref{lemma:widomequiv}.
\end{proof}

\subsection{Applications}
Theorem \ref{thm:rootproducts} can be used to give a shorter 
proof a result of Schmidt and Spitzer in \cite{spitzer}, by using the main result in \cite{bkw},
which reads as follows:

Let $\{P_n(z)\}$ be a sequence of polynomials satisfying
\begin{equation}
P_{n+b} = -\sum_{j=1}^b q_j(z) P_{n+b-j}(z), 
\end{equation}
where the $q_j$ are polynomials. The number $x \in \C$ is a limit of zeros of $\{P_n\}$
if there is a sequence of $z_n$ s.t. $P_n(z_n)=0$ and $\lim_{n\rightarrow \infty} z_n = x.$

For fixed $z,$ we have roots $v_i, 1\leq i \leq b$ of the characteristic equation
$$v^b + \sum_{j=1}^b q_j(z) v^{b-j} = 0.$$

For any $z$ such that the $v_i(z)$ are distinct, we may express $P_n(z)$ as follows:
\begin{equation}\label{eqn:bkwwidom}
P_n(z) = \sum_{j=1}^b r_j(z) v_i(z)^n.
\end{equation}

Under the non-degeneracy conditions that $\{P_n\}$ do not satisfy a recurrence of length less than $b,$
and that there is no $w$ with $|w|=1$ such that $v_i(z) = w v_j(z)$ for some $i \neq j,$
the following holds:
\begin{theorem}\label{thm:bkw}(See \cite{bkw}).
Suppose $\{P_n\}$ satisfy \eqref{eqn:bkwrec}. Then $x$ is a limit of zeros if and only if the roots $v_i$ of the
characteristic equation can be numbered so that one of the following is satisfied:
\begin{enumerate}
\item $|v_1(x)|>|v_j(x)|, 2\leq j \leq b$ and $r_1(z)=0.$
\item $|v_1(x)| = |v_2(x)|=\dots = |v_l(x)| > |v_{j}(x)|, l+1\leq j \leq b$
for some $l\geq 2.$
\end{enumerate}
\end{theorem}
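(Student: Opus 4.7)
The plan is to split the proof into the necessity (``only if'') and sufficiency (``if'') directions and to work throughout with the expansion $P_n(z)=\sum_{j=1}^{b} r_j(z)\,v_j(z)^n$ from \eqref{eqn:bkwwidom}, which is valid off the discriminant locus where two characteristic roots coincide. The $r_j(z)$ are determined by a Vandermonde linear system in the $v_j(z)$ and extend continuously to any point at which the $v_j$ remain distinct; branch points where $v_i(x)=v_j(x)$ are handled separately via Puiseux expansions of the local branches, which reduces them to the generic case.

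For the necessity direction, suppose neither condition (1) nor (2) holds at $x$. Then a unique root $v_1(x)$ achieves the strict maximum modulus and $r_1(x)\neq 0$. On a small disk $U$ around $x$ one has uniform bounds $|v_1(z)|\geq (1+\delta)|v_j(z)|$ for $j\geq 2$ and $|r_1(z)|\geq c>0$, whence
\[
|P_n(z)| \;\geq\; |v_1(z)|^n\bigl(c - C(1+\delta)^{-n}\bigr),
\]
which is positive for all $n$ sufficiently large and every $z\in U$. No sequence of zeros of $\{P_n\}$ can therefore accumulate at $x$.

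Sufficiency under condition (1) is immediate from Hurwitz's theorem: $r_1(x)=0$ together with $|v_1(x)|>|v_j(x)|$ gives $v_1(z)^{-n}P_n(z)\to r_1(z)$ uniformly on a neighborhood of $x$, and the non-degeneracy hypothesis forces $r_1\not\equiv 0$ (otherwise $\{P_n\}$ would satisfy a shorter recurrence). Since the limit is a nonconstant analytic function with an isolated zero at $x$, Hurwitz produces a zero of $P_n$ arbitrarily close to $x$ for all large $n$.

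The main obstacle is sufficiency under condition (2), where $l\geq 2$ roots share the maximum modulus at $x$. Writing
\[
v_1(z)^{-n}P_n(z) \;=\; \sum_{j=1}^{l} r_j(z)\bigl(v_j(z)/v_1(z)\bigr)^n + O(\rho^n),
\]
for some $\rho<1$, one must produce a zero of the principal sum for some $z_n\to x$. On the locus $|v_j(x)/v_1(x)|=1$ the phases $n\arg(v_j(z)/v_1(z))$ vary rapidly with $n$ and nontrivially with $z$; this is precisely where the hypothesis that no ratio $v_j(x)/v_i(x)$ equals a root of unity enters, preventing the phases from locking into a degenerate configuration that would make the principal sum non-vanishing on a whole neighborhood. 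Applying the argument principle to a small circle around $x$ shows that the winding number of the principal sum grows with $n$, producing zeros of the truncated sum inside the disk, and a Rouché comparison with the exponentially small tail transfers each such zero to a genuine zero of $P_n$. The delicate point — and the one that really requires the non-degeneracy conditions — is verifying that this winding phenomenon persists on disks whose radii shrink to $0$ as $n\to\infty$, which is what guarantees that the extracted zeros actually converge to $x$ rather than merely staying bounded.
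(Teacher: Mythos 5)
First, a point of calibration: the paper contains no proof of this statement. Theorem~\ref{thm:bkw} is imported verbatim from Beraha, Kahane and Weiss \cite{bkw} and used as a black box, so there is no internal argument to compare yours against; what follows is an assessment of your attempt on its own terms. The necessity direction (a strictly dominant root with $r_1(x)\neq 0$ yields $|P_n(z)|\geq |v_1(z)|^n\bigl(c - C(1+\delta)^{-n}\bigr)>0$ uniformly near $x$) and sufficiency under condition (1) (normal convergence of $v_1^{-n}P_n$ to $r_1$, non-degeneracy forcing $r_1\not\equiv 0$, then Hurwitz) are the standard arguments and are essentially complete. The one loose end there is the behaviour of the $r_j$ at points where two \emph{non-dominant} roots collide: the individual $r_j$ obtained from the Vandermonde system can blow up there even though their combined contribution to $P_n$ stays exponentially small relative to $v_1^n$, and your one-line appeal to Puiseux expansions defers rather than supplies the needed estimate.

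The genuine gap is sufficiency under condition (2), which is the only hard part of the theorem, and your text stops exactly where the proof would have to start. The assertion that the winding number of $\sum_{j=1}^{l} r_j(z)\bigl(v_j(z)/v_1(z)\bigr)^n$ around a small circle about $x$ grows with $n$ is stated, not derived; note that each ratio $w_j=v_j/v_1$ satisfies $w_j(x)\neq 0$, so $w_j^n$ itself has winding number zero around the origin on any small circle, and the zero count of the principal sum has to come from a genuine analysis of where that sum vanishes. For $l=2$ this can be salvaged by solving $w_2(z)^n=-r_1(z)/r_2(z)$ and counting $n$-th roots captured by the open image $w_2(U)$ of a shrinking disk $U$; for general $l\geq 3$ no such reduction is available, and the published proof needs an equidistribution/almost-periodicity argument on the torus of phases, using precisely the hypothesis that no ratio $v_i/v_j$ is identically a unimodular constant. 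You yourself flag the uniformity of the winding phenomenon on disks shrinking with $n$ as ``the delicate point'' and leave it unverified — but that uniformity \emph{is} the theorem in this case, so as written the proposal proves only necessity and case (1).
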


We are now ready to prove a generalization of Thm.~\ref{thm:spitzer}:
\begin{theorem}
Fix natural numbers $n$ and $0<c<n.$
Let $\gamma_1,\gamma_2,\dots,\gamma_d$ be a sequence of $d$ integers such that $c<\gamma_1<\gamma_2<\dots<\gamma_d.$
Set $\alphavec = (\gamma_1,\dots,\gamma_d)$ and set $\betavec = (1,2,\dots,c,\gamma_1,\dots,\gamma_d).$
Define
\begin{equation*}
B = \left\{ v | v = \lim_{k \rightarrow \infty} v_k, \det(D_{\alphavec,\betavec}^{k} - v_k I_{k})=0 \right\}.
\end{equation*}
Let 
\begin{equation*}
f(z) = \sum_{i=0}^n s_i z^{i-c},\quad Q(v,z) = z^c(f(z)-v).
\end{equation*}
Order the moduli of the zeros, $\rho_i(v)$, of $Q(v,z)$ in increasing order, 
with possible duplicates counted several times, according to multiplicity:
$$0 < \rho_1(v) \leq \rho_2(v) \leq \dots \leq \rho_n(v).$$
Let $C = \left\{ v | \rho_{c}(v) = \rho_{c+1}(v) \right\}.$
Then, $B=C\cup W$ where $W \subset \C$ is a finite set of points.
\end{theorem}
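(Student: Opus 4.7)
The plan is to apply the Beraha--Kahane--Weiss theorem (Theorem~\ref{thm:bkw}) to the polynomial sequence $P_k(v) := \det(D_{\alphavec,\betavec}^k - v I_k)$. This requires two main steps: first, exhibit a linear recurrence in $k$ for $P_k(v)$ with polynomial-in-$v$ coefficients whose characteristic roots are related to the $\rho_i(v)$; second, translate the two alternatives of Theorem~\ref{thm:bkw} into the sets $C$ and $W$ of the statement.

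For the recurrence, I would introduce the perturbed elementary symmetric polynomials $\tilde{s}_c(v) := s_c - v$ and $\tilde{s}_i(v) := s_i$ for $i \neq c$, and let $\tilde{x}_1(v),\dots,\tilde{x}_n(v)$ be their formal roots, i.e., the roots of $\tilde{\chi}(t) := (-1)^n\sum_{i=0}^n(-t)^{n-i}\tilde{s}_i(v)$. The reciprocal relation $\tilde{\chi}(t) = t^n \tilde{\psi}(-1/t)$, with $\tilde{\psi}(z) := \sum_i \tilde{s}_i(v)\,z^i = Q(v,z)$, yields $\tilde{x}_i(v) = -1/\rho_i(v)$, exactly as in the proof of the equivalence of the two Widom formulas. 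Let $\tilde{D}_{\alphavec,\betavec}^k$ denote the matrix produced by the same removal recipe from the perturbed sequence $(\tilde{s}_i(v))$. By Proposition~\ref{prop:matrixisschur} and Theorem~\ref{thm:rootproducts} applied to $\tilde{D}$, the sequence $\{\det \tilde{D}_{\alphavec,\betavec}^k\}_k$ satisfies a linear recurrence of length $b = \binom{n}{c}$ with characteristic polynomial $\prod_{|\sigma|=c}\bigl(t - \prod_{i\in\sigma}\tilde{x}_i(v)\bigr)$. The feature of the specific $\alphavec,\betavec$ in our theorem is that the bulk diagonal entry of $D_{\alphavec,\betavec}^k$ is $s_c$, so $D_{\alphavec,\betavec}^k - vI_k$ and $\tilde{D}_{\alphavec,\betavec}^k$ differ only in a top-left block of size independent of $k$. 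A cofactor (Schur-complement) expansion absorbing this bounded perturbation into the initial conditions then shows that $P_k(v)$ satisfies the same recurrence for $k$ sufficiently large.

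For the second step, order $0 < |\rho_1(v)| \leq \cdots \leq |\rho_n(v)|$, so the characteristic root of maximum modulus is $\prod_{i=1}^c \tilde{x}_i(v) = (-1)^c/\prod_{i=1}^c\rho_i(v)$, corresponding to the $c$ smallest $|\rho_i|$. Alternative~(2) of Theorem~\ref{thm:bkw} --- a tie at the top modulus among two or more characteristic roots --- translates exactly to $|\rho_c(v)| = |\rho_{c+1}(v)|$, since the smallest modulus-preserving swap inside the dominating set $\{1,\dots,c\}$ is to exchange index $c$ for index $c+1$; this gives $v \in C$. Alternative~(1) --- vanishing of the coefficient $r_1(v)$ attached to the dominant root --- is a single equation in $v$: since $r_1(v)$ is a rational function of $v$ built from the polynomial-in-$v$ initial values $P_0(v),\dots,P_{b-1}(v)$ and from the characteristic roots, $r_1(v) = 0$ has only finitely many solutions. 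These, together with the exceptional $v$ at which the non-degeneracy hypotheses of Theorem~\ref{thm:bkw} (no shorter recurrence; no two distinct characteristic roots with unit-modulus ratio) fail, form the finite set $W$.

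The main obstacle is the cofactor step linking $P_k(v)$ to $\det\tilde{D}_{\alphavec,\betavec}^k$. The perturbation $E := (D_{\alphavec,\betavec}^k - vI_k) - \tilde{D}_{\alphavec,\betavec}^k$ has bounded support but $v$-dependent entries, and expanding $\det(\tilde{D}_{\alphavec,\betavec}^k + E)$ produces a $v$-polynomial combination of determinants of subminors $\tilde{D}_{\alphavec',\betavec'}^{k - O(1)}$ with various further row and column removals. One must verify that all such subminors, for $(\alphavec',\betavec')$ varying so that $|\betavec'| - |\alphavec'|$ remains equal to $c$, satisfy the \emph{same} recurrence --- with the same characteristic roots $\prod_{i\in\sigma}\tilde{x}_i(v)$ --- so that the combination $P_k(v)$ inherits it in the form needed to feed into Theorem~\ref{thm:bkw}.
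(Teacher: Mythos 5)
Your proposal follows the same skeleton as the paper: perturb the symbol by replacing $s_c$ with $s_c-v$, identify the roots of the perturbed characteristic polynomial with $-1/\rho_i(v)$, invoke Theorem~\ref{thm:rootproducts} to get a linear recurrence in $k$ with characteristic roots $\prod_{i\in\sigma}x_i(v)$, feed this into Theorem~\ref{thm:bkw}, and translate alternative~(2) into $\rho_c(v)=\rho_{c+1}(v)$ while collecting the finitely many zeros of $r_1(v)$ into $W$. The one place you genuinely diverge is the identification of $\det(D_{\alphavec,\betavec}^k-vI_k)$ with a minor of the perturbed Toeplitz matrix: the paper simply asserts that the main diagonal consists entirely of $s_c$ so that subtracting $vI_k$ produces exactly the perturbed minor, whereas you observe that the two matrices agree only outside a bounded top-left block and propose a Laplace/cofactor expansion over that block. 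Your caution is warranted --- for $d\geq 1$ the diagonal index at position $i$ is $c+\#\{m: i<\gamma_m-m+1\leq i+c\}$, which exceeds $c$ for small $i$ (e.g.\ $n=3$, $c=1$, $\gamma_1=2$ gives top-left entry $s_2$), so the paper's claim is not literally correct and your expansion is the honest fix. The step you flag as the ``main obstacle'' is in fact unproblematic: expanding over the perturbed rows and columns expresses $P_k(v)$ as a fixed $v$-polynomial combination of minors $\tilde{D}^{k-|S|}_{\alphavec',\betavec'}$ with $|\betavec'|-|\alphavec'|=c$ preserved, and Theorem~\ref{thm:rootproducts} already guarantees that every such sequence satisfies the \emph{same} recurrence with characteristic polynomial $\prod_{|\sigma|=c}\bigl(t-\prod_{i\in\sigma}x_i(v)\bigr)$ for $k$ large (and minors violating $\alpha'_i\geq\beta'_i$ vanish identically, so they satisfy it trivially); a fixed linear combination of solutions is again a solution. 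So your route costs a little extra bookkeeping but buys a correct treatment of the boundary block that the paper glosses over; the remaining minor blemishes (writing $\prod_i\tilde{x}_i(v)=(-1)^c/\prod_i\rho_i(v)$ when only the moduli agree, and treating the BKW non-degeneracy hypotheses as pointwise conditions to be absorbed into $W$ rather than global ones) do not affect the argument.
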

\begin{proof}
Consider the sequence of matrices $\{D_{\alphavec,\betavec}^k - v I_k \}_{k=K}^\infty,$ $K = \gamma_d-d$.
It is easy to see that the main diagonal of all these matrices will be of the form $s_c-v,$ 
and no other entries involve either $s_c$ or $v.$
Now, define $s'_i(v) = s_i - \delta_{ic}v,$ where $\delta_{ij}$ is the Dirac delta.
Let us modify \eqref{eqn:chi1} and define
\begin{equation}
\chi(v,t) = \prod_{i=1}^n (t-x_i(v)) = (-1)^{n}\sum_{i=0}^n (-t)^{n-i} s'_{i}(v).
\end{equation}
Notice that $\chi(v,t) = (-1)^nQ(v,-1/t).$ If we enumerate the roots of $\chi(v,t)$ according to their magnitude,
$$0<|x_1(v)|\leq |x_2(v)| \leq \dots \leq |x_n(v)|,$$
we have that $|x_i(v)| = 1/\rho_{i}(v)$ for  $1\leq i \leq n.$

From Thm.~\ref{thm:rootproducts} it follows that the series $\{D_{\alphavec,\betavec}^k - v I_k \}_{k=K}^\infty$
satisfy the characteristic equation
\begin{equation}\label{eqn:characteristicequation2}
\prod_{\stackrel{\sigma \subseteq [n]}{|\sigma| = c }} (t-x_{\sigma_1}(v) x_{\sigma_2}(v)\cdots x_{\sigma_{c}}(v))=0. 
\end{equation}

It is evident that for this characteristic equation the non-degeneracy conditions hold.
All roots are different, and we require all of them for the equation to be symmetric under permutation of the $x_i,$ hence, the recurrence is minimal. 
The second condition holds since the left-hand side of the characteristic equation is irreducible, see \cite{bkw}
for details.

From Thm.~\ref{thm:bkw}, it follows
that the zeros of $\det(D_{\alphavec,\betavec}^{k_m} - v_m I_{k_m})=0$
accumulate exactly where two or more of the largest zeros of \eqref{eqn:characteristicequation2} coincide in magnitude,
or when the corresponding $r_j(z)=0$ in \eqref{eqn:bkwwidom}. 
The latter case\footnote{Alternative 1 cannot be satisfied if $d=0,$ equation \eqref{eqn:bkwwidom} is then Widom's formula, 
and all coefficients $r_i(z)$ are non-zero since all roots $x_j(v)$ are nonzero.} can only hold for only a finite number of points.
The first case is satisfied exactly when 
\begin{align*}
|x_{n-c-1}(v) x_{n-c+1}(v) x_{n-c+2}(v) \cdots x_{n}(v)| &= 
|x_{n-c}(v) x_{n-c+1}(v) x_{n-c+2}(v) \cdots x_{n}(v)| \\
&\Leftrightarrow& \\  
|x_{n-c-1}(v)| &= |x_{n-c}(v)| \\
&\Leftrightarrow&\\
\rho_c(v) &= \rho_{c+1}(c).
\end{align*}
This concludes the proof.
\end{proof}
The same strategy as above may be used to find limits of \emph{generalized eigenvalues},
as defined in \cite{duitsthesis}.

It is also possible to generalize Thm.~\ref{thm:rootproducts} to more general sequences of skew Schur polynomials,
$\{S_{(\kappavec + k\nuvec)/(\lambdavec + k \nuvec)}\}_{k=0}^\infty$ for $\nuvec \subseteq \muvec.$
This may be used to find asymptotics for the number of skew tableaux of certain shapes,
and asymptotics for the set of zeros of the Schur polynomials.

\subsection*{Acknowledgement} I would like to thank my advisor, B.~Shapiro, for introducing me to this subject,
and A.~Kuijlaars for the reference to Widom's formula and the hospitality during my visit to Katholieke Universiteit Leuven.
Also, many thanks to S.~Alsaadi, J.~Backelin, M.~Duits, M.~Leander and M.~Tater for helpful discussions.

\end{document}